\documentclass[12pt, reqno]{amsart}
\usepackage{amsmath, amsthm, amscd, amsfonts, amssymb, graphicx, color, enumerate, xcolor,  mathrsfs, latexsym}
\usepackage[bookmarksnumbered, colorlinks, plainpages]{hyperref}
\hypersetup{colorlinks=true,linkcolor=red, anchorcolor=green, citecolor=cyan, urlcolor=red, filecolor=magenta, pdftoolbar=true}
\newtheorem{theorem}{Theorem}[section]
\newtheorem{lemma}[theorem]{Lemma}
\newtheorem{proposition}[theorem]{Proposition}
\newtheorem{corollary}[theorem]{Corollary}
\theoremstyle{definition}
\newtheorem{definition}[theorem]{Definition}
\newtheorem{example}[theorem]{Example}
\theoremstyle{remark}
\newtheorem{remark}[theorem]{Remark}

\theoremstyle{question}

\makeatletter
\@namedef{subjclassname@2010}{%
  \textup{2010} Mathematics Subject Classification}
\makeatother
\textheight 22truecm \textwidth 17truecm
\setlength{\oddsidemargin}{0.01in}\setlength{\evensidemargin}{0.01in}
\normalsize
\setlength{\topmargin}{-.5cm}

\usepackage{amscd}

\begin{document}
\title[Aron--Berner extensions of triple maps with applications to ....]{Aron--Berner extensions of triple maps with application to the  bidual of    Jordan Banach triple systems}

\author[A.A. Khosravi]{Amin A. Khosravi}
\address{Department of Pure Mathematics, Ferdowsi University of Mashhad, PO Box 1159, Mashhad 91775, Iran.}
\email{amin.khosravi@mail.um.ac.ir}

\author[H.R. Ebrahimi Vishki]{Hamid R. Ebrahimi Vishki}
\address{Department of Pure Mathematics and center of excellence in analysis on algebraic structures (CEAAS), Ferdowsi University of Mashhad, PO Box 1159, Mashhad 91775, Iran.}
\email{vishki@um.ac.ir}

\author[A.M. Peralta]{Antonio M. Peralta}
\address{Departamento de An\`{a}lisis Matem\`{a}tico, Facultad de Ciencias, Universidad de
Granada, 18071 Granada, Spain.}
\email{aperalta@ugr.es}

\subjclass[2010]{Primary:  17C65; Secondary: 17A40, 46H70, 46K70,
 46H25.}

\keywords{Triple map, Aron--Berner extensions, Arens products, Jordan Banach triple system, JB$^*$-triple, ultrapower space, principle of local reflexivity.}

\begin{abstract}
By extending the notion of Arens regularity of bilinear mappings, we say that a bounded trilinear map on Banach spaces is Aron--Berner regular when all its six Aron--Berner extensions to the bidual spaces coincide. We give some results on the Aron--Berner regularity of certain trilinear maps. We then   focus on the bidual, $E^{**},$  of a Jordan Banach triple system $(E,\pi)$, and investigate those conditions  under which  $E^{**}$ is itself a Jordan Banach triple system under each of  the Aron--Berner extensions of the triple product $\pi.$  We also compare these six triple products with those arising from certain ultrafilters based on the ultrapower formulation of the  principle of local reflexivity.
In particular, we examine the Aron--Berner triple products on the bidual of a  JB$^*$-triple in relation with the so-called Dineen's theorem.  Some illuminating  examples are  included and some questions are also left undecided.
 \end{abstract}
\maketitle

\section{Introduction}

The question whether the bidual space of a Jordan Banach triple system can be made into a Jordan Banach triple system remains as an intriguing problem in the general theory of Jordan Banach triple systems. In the realm of JB$^*$-triples, in 1986,  Dineen \cite{dineen1,dineen2} showed  that the bidual, $E^{**},$ of a JB$^*$-triple $E$ is a JB$^*$-triple itself.  Dineen's method for extending the triple product of the original space to the bidual is based on the so-called principle of local reflexivity developed in ultrafilters theory \cite{He}. The procedure induces an isometric embedding from $E^{**}$ into a complemented subspace of a suitable ultrapower $E_{\mathcal U}$, from which the natural triple product is carried to $E^{**}$ making the latter a JB$^*$-triple. Later, by refining the ultrafilter $\mathcal U$ used in  Dineen's proof,  Barton and  Timoney \cite{BT} showed that the ultrapower triple product also is separately $w^*$-continuous.\smallskip

One of our aims here is to equip the bidual of a general Jordan Banach triple system $(E,\pi)$ with six triple products arising from the so-called Aron--Berner extensions of $\pi$. We then study the relationship between these six extensions with each others and also with those triple products arising from certain ultrapowers of $E$.  For this purpose, we first focus on the six  extensions of a general triple map and study their coincidence.\smallskip

Let us give a short history on the different extensions of multilinear maps to the bidual spaces. The problem of extending multilinear mappings to certain superspaces of a given space was initiated in 1951 by  Arens \cite{Arr, Ar} for bilinear operators. The most influential contribution is due to  Aron and  Berner, who settled in \cite{AB} the most outspread method for extending multilinear operators (see also \cite{ArColGam91}). In 1989, Davie and  Gamelin (see \cite{DG}) studied the isometric nature of the Aron--Berner extensions in the case of the bidual. Different descriptions of the Aron--Berner extensions are gathered and discussed in the survey articles \cite{CGV,Z1}.
\smallskip

The results of this paper are organized  in two sections. Section  \ref{first} is devoted to the study of different methods to produce extensions of a bounded trilinear map to the bidual spaces. Subsection \ref{extension}  introduces these extensions  and contains some preparatory materials. These extensions are inspired  by  Arens' method \cite{Ar}, and they coincide with the extensions constructed by Aron and
Berner for polynomials \cite{AB}.  We say that a trilinear map is Aron--Berner regular when all its six extensions to the bidual spaces coincide. In Section \ref{AB}, we investigate the Aron--Berner regularity of certain trilinear mappings on Banach spaces. First  we prove  our main result of this section (Theorem \ref{c6}) containing some helpful equivalent conditions for  the Aron--Berner regularity of a trilinear  map. We subsequently apply this result to rediscover a theorem due to Bombal and Villanueva, providing sufficient conditions to guarantee the Aron--Berner regularity of some trilinear maps whose certain derived operators are weakly compact (see Theorem \ref{c7}). We culminate this section by presenting the useful Corollary \ref{regular space} that will be crucial for the next section.\smallskip

In Section \ref{second}, we focus our attention on the biduals of (real or complex) Jordan Banach triple systems and JB$^*$-triples. In subsection \ref{second1}, we apply the method outlined in Section \ref{first} to assign six extensions  $\pi^0,$ $\pi^1,$ $\pi^2,$ $\pi^3,$ $\pi^4$, and $\pi^5$ (termed Aron--Berner triple products) to the bidual, $E^{**},$ of a Jordan Banach triple system $(E,\pi)$, and we study their properties in line with  the following two questions:
 \begin{enumerate}[\hspace{2em}$\bullet$]
 \item When is a Jordan Banach triple system $(E,\pi)$ Aron--Berner regular?
 \item Is  $(E^{**},\pi^i)$ ($i=0,\ldots,5$)  a Jordan Banach triple system?
 \end{enumerate}
We examine the first question in Example \ref{not} illustrating that a Jordan Banach triple system need not be generally  Aron--Berner regular. However, the situation is different for a JB$^*$-triple, as we rediscover in Corollary \ref{regularity} that every JB$^*$-triple is Aron--Berner regular. A partial answer is also provided by Proposition \ref{p unitary}. The second question is also examined and answered in negative by the same Example \ref{not}.  The main discrepancy arises form the fact that, for $i\in\{0,1,\ldots,5\}$, the Aron--Berner extension $\pi^i$ on $E^{**}$ is not, in general, symmetric in the outer variables, and this has some unpleasant consequences when dealing with the bidual. In Proposition \ref{symmetry}, we investigate when an extension $\pi^i$ is symmetric in the outer variables, and then in Theorem \ref{t outer symmetric is sufficient and necessary for pi0} we show that $(E^{**},\pi^0)$ is a Jordan Banach triple system whenever $\pi^0$ (equivalently, $\pi^2$) is symmetric in the outer variables; although Example \ref{example pi1 symmetric but not Jordan identity} shows that this is not the case for other extensions.\smallskip 

 In the context of JB$^*$-triple, however, we expect a more  rich  structure for the bidual $E^{**}$. In this regard, we bring  Theorem \ref{e13} as a consequence of Dineen's theorem confirming  that the bidual of a JB$^*$-triple is a JBW$^*$-triple under each Aron--Berner triple product.
Finally in subsection \ref{second2}, we do some comparisons between the Aron--Berner triple products and  the ultrapower triple products and we also highlight some advantages of the Aron--Berner triple products. We conclude the paper with some unanswered questions in the final section.

\section{{\sc Extensions of triple maps to the bidual spaces}}\label{first}

Although the problem of extending bounded bilinear maps was originated by the pioneer works of Arens \cite{Arr, Ar}, the most successful method for multilinear maps is due to Aron and Berner \cite{AB}, who showed that such extensions always exist (see also Davie and Gamelin \cite{DG} and \cite{ArColGam91}). Before going on to the main aim of this section, we succinctly present a description of the six Aron--Berner extensions of a trilinear map in subsection \ref{extension}, and we postpone the main results of this section to subsection \ref{AB}.

\subsection{Aron--Berner extensions of trilinear maps}\label{extension}

Let $X_{1} , X_{2} , X_{3},$ and $ Y $ be Banach   spaces. We define  the  adjoint $f^*:Y^*\times X_1\times X_2\to X_3^*$ of a bounded trilinear  mapping $ f:X_{1} \times X_{2}\times X_{3} \longrightarrow Y$ by
 \begin{align}\label{11}
 \langle f^*(y^*,x_1,x_2),x_3\rangle=\langle y^*,f(x_1,x_2,x_3)\rangle,\qquad (x_1\in X_1,x_2\in X_2, x_3\in X_3, y^*\in Y^*).
\end{align}
The iterated adjoins of $f$ can also be defined by  $f^{**}:=(f^*)^*, f^{***}:=(f^{**})^*, f^{****}:=(f^{***})^*$, and so on. For the sake of simplicity, we use the symbol``$^\circledast$'' for ``$^{****}$''
and we also identify a  Banach space  $X$ with its canonical image in its bidual $X^{\ast\ast}.$
\smallskip
It   is not hard to see that  the bounded trilinear  map $$f^{\circledast}: X_{1}^{**} \times X_{2}^{**}\times X_{3}^{**} \longrightarrow Y^{**}$$ is a norm preserving extension of $f$ and it is the unique extension of $f$ to the bidual spaces with the property that,
for every  fixed   $ x_{2}^{\ast\ast}\in  X_{2}^{\ast\ast}$, $x_{3}^{\ast\ast} \in X_{3}^{\ast\ast}$, $x_{1}\in X_{1}$, and $x_{2}\in X_{2}$, the  maps
\begin{align}\label{identity}
  \cdot\longmapsto f^{\circledast}(\cdot, x_{2}^{\ast\ast}, x_{3}^{\ast\ast}),\quad
  \cdot\longmapsto f^{\circledast}(x_{1},\cdot ,x_{3}^{\ast\ast}),\quad
  \cdot\longmapsto f^{\circledast}(x_{1},x_{2},\cdot)
  \end{align}
 are $w^{\ast}$-continuous on $X_1^{**}, X_2^{**}$, and $X_3^{**}$, respectively.\smallskip

Similarly, by changing the usual ordering in  the involved three variables,  one can obtain six, generally different, extensions of $f$ to the bidual spaces. More precisely, for every permutation $\sigma\in S_3,$ where
\[S_{3}:=\{\sigma_0:=(),\sigma_1:=(23),\sigma_2:=(13),\sigma_3:=(12),\sigma_4:=(132),\sigma_5:=(123)\}\]
is  the symmetric group of  all permutations on the three elements  set $\{1,2,3\},$
 if we  define $ f^{\sigma}: X_{\sigma(1)}\times X_{\sigma(2)} \times X_{\sigma(3)} \longrightarrow Y$ by
 \[f^{\sigma}(x_{\sigma(1)},x_{\sigma(2)},x_{\sigma(3)})= f(x_{1},x_{2},x_{3}),\quad (x_{\sigma(i)}\in X_{\sigma(i)}, i=1,2,3),\]
 then $f^{\sigma}$ is a bounded  trilinear map with the same norm of $f$ and, it is easy to verify that
\[f^{\sigma \circledast\sigma ^{-1}}=((f^\sigma)^\circledast)^{\sigma^{-1}}:  X_{1}^{\ast\ast}\times X_{2}^{\ast\ast}\times X_{3}^{\ast\ast}\longrightarrow Y^{\ast\ast}\]
also is a norm preserving multilinear extension of $f$ to the bidual spaces.\smallskip

Using a standard procedure, based on the fact that every Banach space is $w^*$-dense in its bidual, if $({x_1}_{\alpha_1}), ({x_2}_{\alpha_2}), ({x_3}_{\alpha_3})$ are, respectively, bounded nets in $X_1, X_2, X_3, $  $w^*$-converging  to $x_1^{**},x_2^{**},x_3^{**}$, one can easily verify that
\begin{align}\label{itrated}
f^{\sigma \circledast\sigma ^{-1}}(x_{1}^{**},x_{2}^{**},x_{3}^{**})=w^*\mbox{-}\lim_{\alpha_{\sigma(1)}} \lim_{\alpha_{\sigma(2)}}\lim_{\alpha_{\sigma(3)}} f({x_1}_{\alpha_1},{x_2}_{\alpha_2},{x_3}_{\alpha_3}),
\end{align}
where by ``$w^*\mbox{-}\lim_{\alpha_{\sigma(1)}} \lim_{\alpha_{\sigma(2)}}\lim_{\alpha_{\sigma(3)}}$'' we mean that all of the above limits are taken in the $w^*$ topology.    \smallskip

The crucial property of $f^{\circledast}$ described in \eqref{identity} admits an appropriate reformulation for the extension $f^{\sigma \circledast\sigma ^{-1}}$. To avoid ambiguity, it should be noted that, for a specific $\sigma$, each  component of the triple  $(\cdot_{\sigma(1)},\cdot_{\sigma(2)}, \cdot_{\sigma(3)})$ will jump  in its right place. For example, $(\cdot_{\sigma_5(1)}, x_{\sigma_5(2)}^{\ast\ast},x_{\sigma_5(3)}^{\ast\ast})=(x_{1}^{\ast\ast},\cdot, x_{3}^{\ast\ast}).$ The concrete property is presented in the next result.

\begin{proposition}\label{B2} Let  $ f:X_{1} \times X_{2}\times X_{3} \longrightarrow Y $ be a bounded trilinear map. Then, for every $\sigma\in S_3,$ the map $f^{\sigma\circledast\sigma ^{-1}}$
   is the   unique extension of $f$ to the bidual spaces with the property that, for every  fixed  $ x_{\sigma(2)}^{\ast\ast}\in  X_{\sigma(2)}^{\ast\ast}$, $x_{\sigma(3)}^{\ast\ast} \in X_{\sigma(3)}^{\ast\ast}$, $x_{\sigma(1)}\in X_{\sigma(1)}$, and $x_{\sigma(2)}\in X_{\sigma(2)}$, the   maps
\begin{align*}
\cdot\longmapsto f^{\sigma\circledast\sigma ^{-1}}(\cdot_{\sigma(1)}, x_{\sigma(2)}^{\ast\ast},x_{\sigma(3)}^{\ast\ast}):X_{\sigma(1)}^{**}\longrightarrow Y^{**},\\
\cdot\longmapsto f^{\sigma\circledast\sigma ^{-1}}(x_{\sigma(1)},\cdot_{\sigma(2)} ,x_{\sigma(3)}^{\ast\ast}): X_{\sigma(2)}^{**}\longrightarrow Y^{**},\\
\cdot\longmapsto f^{\sigma\circledast\sigma ^{-1}}(x_{\sigma(1)},x_{\sigma(2)},\cdot_{\sigma(3)}):X_{\sigma(3)}^{**}\longrightarrow Y^{**},
 \end{align*}
 are $w^{\ast}\mbox{-}w^{\ast}$-continuous.\smallskip
\end{proposition}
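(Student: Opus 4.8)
The plan is to reduce the statement for a general permutation $\sigma$ to the already-established property of the standard extension $f^{\circledast}$ recorded in \eqref{identity}, and then to run a density argument for uniqueness. The starting observation is that, by the very definition $f^{\sigma\circledast\sigma^{-1}}=((f^\sigma)^\circledast)^{\sigma^{-1}}$, evaluating this extension amounts to evaluating the standard extension of the reordered map $f^\sigma$ on the correspondingly reordered triple; concretely,
\[
f^{\sigma\circledast\sigma ^{-1}}(x_1^{**},x_2^{**},x_3^{**})=(f^\sigma)^\circledast\big(x_{\sigma(1)}^{**},x_{\sigma(2)}^{**},x_{\sigma(3)}^{**}\big),
\]
which is consistent with the iterated $w^*$-limit description \eqref{itrated}. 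After this reindexing, the three partial maps appearing in the statement are exactly the three partial maps of $(f^\sigma)^\circledast$ taken in its own natural variable order.

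For existence, I would apply \eqref{identity} verbatim to the bounded trilinear map $f^\sigma\colon X_{\sigma(1)}\times X_{\sigma(2)}\times X_{\sigma(3)}\to Y$. This yields that the first-slot map of $(f^\sigma)^\circledast$ (with the second and third slots fixed as arbitrary bidual elements), its second-slot map (first slot in the original space, third slot a bidual), and its third-slot map (first two slots in the original space) are all $w^*$-$w^*$-continuous. Transporting these assertions through the reindexing identity above turns them precisely into the three continuity statements claimed for $f^{\sigma\circledast\sigma^{-1}}$, since the reindexing alters only the labelling of the variables and not the topologies involved. That $f^{\sigma\circledast\sigma^{-1}}$ is a norm-preserving extension of $f$ is already noted in the text preceding the statement.

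For uniqueness, suppose $F\colon X_1^{**}\times X_2^{**}\times X_3^{**}\to Y^{**}$ is any extension of $f$ enjoying the three continuity properties; I would prove $F=f^{\sigma\circledast\sigma^{-1}}$ by a three-step staircase argument exploiting that each $X_i$ is $w^*$-dense in $X_i^{**}$. Both maps extend $f$, hence agree on $X_{\sigma(1)}\times X_{\sigma(2)}\times X_{\sigma(3)}$. Fixing $x_{\sigma(1)}\in X_{\sigma(1)}$ and $x_{\sigma(2)}\in X_{\sigma(2)}$, the third-slot continuity together with $w^*$-density of $X_{\sigma(3)}$ forces agreement on $X_{\sigma(1)}\times X_{\sigma(2)}\times X_{\sigma(3)}^{**}$. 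Feeding this into the second-slot continuity, now with $x_{\sigma(3)}^{**}$ an arbitrary bidual, and using $w^*$-density of $X_{\sigma(2)}$, extends the agreement to $X_{\sigma(1)}\times X_{\sigma(2)}^{**}\times X_{\sigma(3)}^{**}$; a final application of the first-slot continuity and $w^*$-density of $X_{\sigma(1)}$ gives agreement everywhere. The delicate point throughout is the bookkeeping: one must free the slots in exactly the order $\sigma(3),\sigma(2),\sigma(1)$, so that at each stage the variable being freed sits in the original (dense) space while the previously freed variables have already been promoted to biduals, matching the precise hypotheses of the three continuity conditions. I expect this index-tracking, rather than any genuine analytic difficulty, to be the main obstacle.
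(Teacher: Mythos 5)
Your proof is correct and takes essentially the same route the paper intends: Proposition \ref{B2} is stated there without a separate proof, precisely as the reformulation of the defining property \eqref{identity} applied to the permuted map $f^{\sigma}$, which is exactly your reindexing identity $f^{\sigma\circledast\sigma^{-1}}(x_1^{**},x_2^{**},x_3^{**})=(f^{\sigma})^{\circledast}\big(x_{\sigma(1)}^{**},x_{\sigma(2)}^{**},x_{\sigma(3)}^{**}\big)$. Your uniqueness staircase via Goldstine's theorem, freeing the slots in the order $\sigma(3),\sigma(2),\sigma(1)$, correctly spells out the standard density argument underlying the paper's uniqueness claim for $f^{\circledast}$, transported through the same relabelling.
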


For instance, the trilinear map  $f^{\sigma_5\circledast\sigma_5^{-1}}$ is the unique extension of $f$ such that, for every fixed $ x_{3}^{\ast\ast}\in  X_{3}^{\ast\ast}$, $x_{1}^{\ast\ast} \in X_{1}^{\ast\ast}$, $x_{2}\in X_{2}$, and $x_{3}\in X_{3}$, the  maps
\[\cdot\longmapsto f^{\sigma_5\circledast\sigma_5^{-1}}(x_{1}^{\ast\ast},\cdot, x_{3}^{\ast\ast}),\quad \cdot\longmapsto f^{\sigma_5\circledast\sigma_5^{-1}}(x_{1}^{\ast\ast}, x_{2},\cdot),\ \ \cdot\longmapsto f^{\sigma_5\circledast\sigma_5^{-1}}(\cdot, x_{2},x_{3})\]
 are $w^{\ast}\mbox{-}w^{\ast}$-continuous on $ X_{2}^{\ast\ast}\longrightarrow Y^{**},  X_{3}^{\ast\ast}\longrightarrow Y^{**},  X_{1}^{\ast\ast}\longrightarrow Y^{**},$ respectively.\medskip

\begin{definition}\label{B5}
A bounded trilinear map $f: X_{1}\times X_{2} \times X_{3}\longrightarrow Y $  is said to be Aron--Berner  regular if its six Aron--Berner extensions to the bidual spaces coincide, that is, $f^{\sigma \circledast \sigma ^{-1}}=f^{\circledast} $ for each $\sigma \in S_{3}$.
\end{definition}

To give a concrete example, take a normed space $X$ and $\phi, \psi\in X^*$. Then the trilinear map
$f: X\times X \times X\longrightarrow X$   defined by
\[(x_{1},x_{2},x_{3})\mapsto f(x_{1},x_{2},x_{3})=\langle\phi ,x_{1}\rangle\langle\psi , x_{2}\rangle x_{3}: X\times X \times X\longrightarrow X\qquad (x_{1},x_{2},x_{3}\in X),\]
 is Aron--Berner regular. Indeed, a direct verification reveals that
\[f^{\circledast}(x_{1}^{**},x_{2}^{**},x_{3}^{**})=\langle x_{1}^{\ast\ast} ,\phi\rangle\langle x_{2}^{\ast\ast} ,\psi\rangle x_{3}^{\ast\ast}\qquad (x_{1}^{**},x_{2}^{**},x_{3}^{**}\in X^{**}).\]\smallskip
{\begin{remark}\label{r new} Suppose $ f:X_{1} \times X_{2}\times X_{3} \longrightarrow Y$ admits a norm preserving extension $F:X_{1}^{**} \times X_{2}^{**}\times X_{3}^{**} \longrightarrow Y^{**}$ (produced by any method, Aron--Berner or any other) which is separately $w^*$-continuous. Then it can be easily deduced form \eqref{itrated} that $$f^{\sigma \circledast\sigma ^{-1}}(x_{1}^{**},x_{2}^{**},x_{3}^{**})= F(x_{1}^{**},x_{2}^{**},x_{3}^{**}),$$ for all $(x_{1}^{**},x_{2}^{**},x_{3}^{**})\in X_{1}^{**} \times X_{2}^{**}\times X_{3}^{**}$ (this was observed, for example, in \cite[page 79]{ArColGam91}, and with other words in \cite[\S 1.1]{PVMY}). Consequently, if there exists an extension of $f$ to a trilinear mapping $F:X_{1}^{**} \times X_{2}^{**}\times X_{3}^{**} \longrightarrow Y^{**}$ which is separately $w^*$-continuous, then all Aron--Berner extensions of $f$ coincide with $F$, so  $f$ is Aron--Berner regular. Reciprocally, if $f$ is Aron--Berner regular then all its Aron--Berner extensions coincide, and thus Proposition \ref{B2} assures that $f^{\circledast}$ is separately $w^*$-continuous. The same equivalence holds for multi-sesquilinear maps too, in the complex setting by multi-sesquilinear maps we mean maps which are conjugate-linear at some variables.
\end{remark}}

\subsection{Aron--Berner regularity of  trilinear maps}\label{AB}
We are now preparing to present the main result  of this section concerning the Aron--Berner regularity of a trilinear map (see Theorem \ref{c6}). For this purpose, we need and provide the following  lemmas which are also interesting by their own right.\smallskip

By applying Proposition \ref{B2}, it is not hard to check that all the six Aron--Berner extensions $f^{\sigma\circledast\sigma^{-1}}:X_{1}^{\ast\ast}\times X_{2}^{\ast\ast}\times X_{3}^{\ast\ast}\longrightarrow Y^{**}, (\sigma\in S_3)$,  coincide  on each one of the following subsets:
 \begin{align}\label{B8}
X_{1}\times X_{2}\times X_{3}^{\ast\ast},\quad  X_{1}\times X_{2}^{\ast\ast}\times X_{3},\quad {\rm and}\ X_{1}^{\ast\ast}\times X_{2}\times X_{3}.
 \end{align}
 We use this fact, in the following lemma, to establish more identities of the various extensions $f^{\sigma\circledast\sigma^{-1}}$ on certain subsets of  $X_{1}^{\ast\ast}\times X_{2}^{\ast\ast}\times X_{3}^{\ast\ast}.$
Before proceeding, to simplify the notation, we note that $f^{\sigma_0\circledast\sigma_0^{-1}}=f^{\circledast},$ and that  $\sigma_i^{-1}=\sigma_i$ for $i=0,1,2,3$, while $\sigma_4^{-1}=\sigma_5.$

\begin{lemma}\label{B9} Let  $ f:X_{1} \times X_{2}\times X_{3} \longrightarrow Y $ be a bounded trilinear mapping and let  $x_i\in X_i, x_i^{**}\in X_i^{**}$ for  $i=1,2,3.$ Then the following identities hold:
\begin{enumerate}[$(a)$]
\item $f^{\circledast}(x_{1},x_{2}^{\ast\ast},x_{3}^{\ast\ast})=f^{\sigma_{3}\circledast\sigma_{3}}(x_{1},x_{2}^{\ast\ast},x_{3}^{\ast\ast})=
f^{\sigma_{5}\circledast\sigma_{4}}(x_{1},x_{2}^{\ast\ast},x_{3}^{\ast\ast})\ and \\
f^{\sigma_{1}\circledast\sigma_{1}}(x_{1},x_{2}^{\ast\ast},x_{3}^{\ast\ast})=
f^{\sigma_{2}\circledast\sigma_{2}}(x_{1},x_{2}^{\ast\ast},x_{3}^{\ast\ast})=
f^{\sigma_{4}\circledast\sigma_{5}}(x_{1},x_{2}^{\ast\ast},x_{3}^{\ast\ast}) ;$\\
\item $f^{\circledast}(x_{1}^{\ast\ast},x_{2},x_{3}^{\ast\ast})=f^{\sigma_{1}\circledast\sigma_{1}}(x_{1}^{\ast\ast},x_{2},x_{3}^{\ast\ast})=
f^{\sigma_{3}\circledast\sigma_{3}}(x_{1}^{\ast\ast},x_{2},x_{3}^{\ast\ast})\ and\\
f^{\sigma_{2}\circledast\sigma_{2}}(x_{1}^{\ast\ast},x_{2},x_{3}^{\ast\ast})=
f^{\sigma_{4}\circledast\sigma_{5}}(x_{1}^{\ast\ast},x_{2},x_{3}^{\ast\ast})=
f^{\sigma_{5}\circledast\sigma_{4}}(x_{1}^{\ast\ast},x_{2},x_{3}^{\ast\ast});$\\
\item $f^{\circledast}(x_{1}^{\ast\ast},x_{2}^{\ast\ast},x_{3})=f^{\sigma_{1}\circledast\sigma_{1}}(x_{1}^{\ast\ast},x_{2}^{\ast\ast},x_{3})=
f^{\sigma_{4}\circledast\sigma_{5}}(x_{1}^{\ast\ast},x_{2}^{\ast\ast},x_{3})\ and \\
f^{\sigma_{2}\circledast\sigma_{2}}(x_{1}^{\ast\ast},x_{2}^{\ast\ast},x_{3})=
f^{\sigma_{3}\circledast\sigma_{3}}(x_{1}^{\ast\ast},x_{2}^{\ast\ast},x_{3})=
f^{\sigma_{5}\circledast\sigma_{4}}(x_{1}^{\ast\ast},x_{2}^{\ast\ast},x_{3}).$\\
\end{enumerate}
\end{lemma}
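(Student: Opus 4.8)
The plan is to read everything directly off the iterated $w^*$-limit formula \eqref{itrated}, whose content is that $f^{\sigma\circledast\sigma^{-1}}(x_1^{**},x_2^{**},x_3^{**})$ is obtained, for arbitrary bounded nets $(x_{i,\alpha_i})$ $w^*$-converging to $x_i^{**}$, by taking the innermost limit in the variable $\sigma(3)$, then in $\sigma(2)$, and finally (outermost, in the $w^*$ topology) in $\sigma(1)$; so to each $\sigma$ one attaches the innermost-to-outermost ordering $(\sigma(3),\sigma(2),\sigma(1))$. The single observation driving the whole lemma is this: if one argument lies in the original space, say $x_j^{**}=x_j\in X_j$, then we are free to represent it by the constant net $x_{j,\alpha_j}\equiv x_j$; the quantity $f(x_{1,\alpha_1},x_{2,\alpha_2},x_{3,\alpha_3})$ is then independent of $\alpha_j$, so the limit in that slot is inert and may be deleted from the chain irrespective of the position it occupies. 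Consequently the value of $f^{\sigma\circledast\sigma^{-1}}$ at such a point depends only on the relative order in which the two surviving, genuinely bidual, limits are performed.

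First I would treat part $(a)$, where the distinguished original variable is the first one. Deleting the inert $\alpha_1$-limit, \eqref{itrated} collapses to a double limit in the variables $2$ and $3$, governed solely by which of these two limits is innermost. Running through $S_3$ with the rule $(\sigma(3),\sigma(2),\sigma(1))$ one checks that $\sigma_0,\sigma_3,\sigma_5$ all place the $3$-limit inside the $2$-limit, whereas $\sigma_1,\sigma_2,\sigma_4$ do the reverse; for each triple the three extensions therefore reduce to literally the same expression, $w^*\text{-}\lim_{\alpha_2}\lim_{\alpha_3} f(x_1,x_{2,\alpha_2},x_{3,\alpha_3})$ in the first case and $w^*\text{-}\lim_{\alpha_3}\lim_{\alpha_2} f(x_1,x_{2,\alpha_2},x_{3,\alpha_3})$ in the second. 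This is exactly the pair of identities asserted in $(a)$. Parts $(b)$ and $(c)$ are handled verbatim, with the inert variable moved to the second and third slot respectively; in each case the six permutations split into the two triples recorded in the statement according to the relative order of the two remaining limits.

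An alternative route, more in the spirit of the fact \eqref{B8} just invoked, is to argue by $w^*$-density and separate continuity. For the first triple of $(a)$ one fixes $x_1\in X_1$ and $x_3^{**}\in X_3^{**}$ and varies the second variable: Proposition \ref{B2} shows that $x_2^{**}\mapsto f^{\sigma\circledast\sigma^{-1}}(x_1,x_2^{**},x_3^{**})$ is $w^*$-continuous for $\sigma\in\{\sigma_0,\sigma_3,\sigma_5\}$ (for $\sigma_0$ variable $2$ is the middle slot, so one needs variable $1$ finite and variable $3$ in the bidual, which is precisely the configuration at hand; for $\sigma_3,\sigma_5$ variable $2$ is the outer slot and the two inner slots may be taken in the bidual). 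These continuous maps agree on the $w^*$-dense set $X_2$ by \eqref{B8}, hence on all of $X_2^{**}$. For the second triple $\{\sigma_1,\sigma_2,\sigma_4\}$ one instead fixes $x_1\in X_1$, $x_2^{**}\in X_2^{**}$ and varies the third variable, for which Proposition \ref{B2} again supplies the required continuity, the agreement on the dense set $X_3$ again coming from \eqref{B8}. The decisive point in choosing which variable to vary is that continuity in a given slot is available only when the slots inner to it sit in the bidual, so one must vary the outermost of the two bidual variables.

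The computations are routine and I expect the only real hazard to be bookkeeping: correctly converting each $\sigma_i$ into its innermost-to-outermost ordering and verifying that the three members of each listed triple genuinely induce the same relative order on the two bidual variables. Once the ordering rule $(\sigma(3),\sigma(2),\sigma(1))$ is fixed there is no conceptual obstacle, everything resting on the single principle that a limit taken along a constant net is inert, together with the elementary facts that constant nets are admissible in \eqref{itrated}, that $X_j$ is $w^*$-dense in $X_j^{**}$, and that the $w^*$-topology of $Y^{**}$ is Hausdorff, so that coincidence on a dense subset upgrades to equality.
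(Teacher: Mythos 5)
Your proposal is correct, and its ``alternative route'' is precisely the paper's own proof: the paper fixes $x_1\in X_1$, uses the coincidence of all six extensions on the sets in \eqref{B8}, invokes Proposition \ref{B2} for $w^*$-continuity in the second slot for $\sigma\in\{\sigma_0,\sigma_3,\sigma_5\}$ (respectively the third slot for $\sigma\in\{\sigma_1,\sigma_2,\sigma_4\}$), and concludes via Goldstine's theorem --- exactly your density argument, including your correct identification that one must vary the outermost of the two bidual variables. Your first route, reading everything off \eqref{itrated} by inserting a constant net in the slot occupied by the original-space variable and deleting the resulting inert limit, is a legitimate shortcut not taken by the paper; it is sound because the integrand $f(x_{1,\alpha_1},x_{2,\alpha_2},x_{3,\alpha_3})$ is genuinely independent of the index $\alpha_j$ when slot $j$ carries a constant net, so that limit may be removed wherever it sits in the chain, the existence of the iterated limits being part of the content of \eqref{itrated}. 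The bookkeeping checks out in all three parts: converting each $\sigma_i$ to its innermost-to-outermost ordering $(\sigma(3),\sigma(2),\sigma(1))$ and deleting the inert slot reproduces exactly the two triples listed in each of $(a)$, $(b)$, $(c)$. What the limit-formula route buys is a one-line mechanical criterion (the relative order of the two surviving limits) that makes the grouping of the six permutations transparent; what the paper's route buys is independence from the net formula, resting only on the characterizing continuity properties of Proposition \ref{B2}, which is the form in which the lemma is then reused in the proofs of Lemma \ref{c1} and Theorem \ref{c6}.
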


\begin{proof}
We give a proof for $(a)$, other parts can be obtained similarly . Fix  $x_1\in X_1,x_2^{**}\in X_2^{**},$ and $ x_3^{**}\in X_3^{**}.$ Since all $f^{\sigma\circledast\sigma^{-1}}$ coincide on $X_{1}\times X_{2}\times X_{3}^{\ast\ast}$ and,  by Proposition \ref{B2},  the map $\cdot\mapsto f^{\sigma\circledast\sigma^{-1}}(x_{1},\cdot,x_3^{**})$ is $w^*\text{-}w^*$-continuous for $\sigma\in\{\sigma_0,\sigma_3,\sigma_5\}$,  we get, via Goldstine's theorem, the coincidence of $f^{\sigma\circledast\sigma^{-1}} $ for $\sigma\in\{\sigma_0,\sigma_3,\sigma_5\}$, on $X_{1}\times X_{2}^{\ast\ast}\times X_{3}^{\ast\ast}$. Similarly, since all $f^{\sigma\circledast\sigma^{-1}}$ coincide on $X_{1}\times X_{2}^{\ast\ast}\times X_{3}$ and,  by Proposition  \ref{B2},  the map $\cdot\mapsto f^{\sigma\circledast\sigma^{-1}}(x_{1},x_2^{**},\cdot)$ is $w^*\text{-}w^*$-continuous for $\sigma\in\{\sigma_1,\sigma_2,\sigma_4\}$, we obtain the coincidence of  $f^{\sigma\circledast\sigma^{-1}}$  on $X_{1}\times X_{2}^{\ast\ast}\times X_{3}^{\ast\ast}$ for every $\sigma\in\{\sigma_1,\sigma_2,\sigma_4\}$.
\end{proof}

In the following lemma, we apply  Lemma \ref{B9} to  show that the Aron--Berner regularity of $f$  is equivalent to coincidence of only  certain extensions of $f$. We notice  that there are eight triplets of permutations in $S_3$ with different values at $1$, which are listed as follows:
 \[(\sigma_0,\sigma_2,\sigma_3), (\sigma_0,\sigma_2,\sigma_5), (\sigma_0,\sigma_3,\sigma_4), (\sigma_0,\sigma_4,\sigma_5), (\sigma_1,\sigma_2,\sigma_3), (\sigma_1,\sigma_2,\sigma_5), (\sigma_1,\sigma_3,\sigma_4)\ \mbox{and}\ (\sigma_1,\sigma_4,\sigma_5).\]
\begin{lemma}\label{c1} A bounded trilinear map $ f : X_{1}\times X_{2} \times X_{3}\longrightarrow Y $ is Aron--Berner regular if and only if $f^{\tau \circledast \tau ^{-1}}=f^{\rho \circledast\rho^{-1}}=f^{\eta\circledast\eta^{-1}} $ for some permutations $\tau, \rho, \eta$ in $S_3$ with different values at $1$.
\end{lemma}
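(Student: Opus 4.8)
The forward implication is immediate: if $f$ is Aron--Berner regular then all six extensions coincide, so a fortiori $f^{\tau\circledast\tau^{-1}}=f^{\rho\circledast\rho^{-1}}=f^{\eta\circledast\eta^{-1}}$ for any triple of permutations whatsoever. The entire content lies in the converse, and the plan is to funnel everything through the three one-variable-grounded subsets $X_1\times X_2^{**}\times X_3^{**}$, $X_1^{**}\times X_2\times X_3^{**}$, $X_1^{**}\times X_2^{**}\times X_3$ appearing in Lemma \ref{B9}.

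First I would record the grouping supplied by Lemma \ref{B9}: on each of these three subsets the six extensions split into two triples that agree internally, namely $\{\sigma_0,\sigma_3,\sigma_5\}$ versus $\{\sigma_1,\sigma_2,\sigma_4\}$ on the first, $\{\sigma_0,\sigma_1,\sigma_3\}$ versus $\{\sigma_2,\sigma_4,\sigma_5\}$ on the second, and $\{\sigma_0,\sigma_1,\sigma_4\}$ versus $\{\sigma_2,\sigma_3,\sigma_5\}$ on the third. The key elementary observation is that each of these six triples contains two permutations sharing the same value at $1$ (for instance $\sigma_3(1)=\sigma_5(1)=2$ in the first triple, $\sigma_0(1)=\sigma_1(1)=1$ in the third). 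Consequently a triple $\{\tau,\rho,\eta\}$ whose members take pairwise distinct values at $1$ coincides with none of these six triples, hence must meet both blocks of each of the three partitions. Since $f^{\tau\circledast\tau^{-1}}=f^{\rho\circledast\rho^{-1}}=f^{\eta\circledast\eta^{-1}}$ holds everywhere, on each grounded subset the common value of one block is thereby identified with the common value of the complementary block; this collapses each partition and shows that \emph{all six} extensions coincide on each of the three one-variable-grounded subsets.

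The second step is to release the grounded variable. I would sort the six permutations according to their value at $1$ into the pairs $\{\sigma_0,\sigma_1\}$, $\{\sigma_3,\sigma_5\}$, $\{\sigma_2,\sigma_4\}$. For two permutations $\sigma,\sigma'$ with $\sigma(1)=\sigma'(1)=k$, Proposition \ref{B2} guarantees that both $f^{\sigma\circledast\sigma^{-1}}$ and $f^{\sigma'\circledast(\sigma')^{-1}}$ are $w^*$-$w^*$-continuous in the $k$-th variable while the other two are held fixed in the bidual. Starting from the equality of these two extensions on the subset in which only the $k$-th variable is grounded (established in the previous step), approximating an arbitrary $x_k^{**}$ by a bounded net in $X_k$ via Goldstine's theorem and passing to the $w^*$-limit yields equality on all of $X_1^{**}\times X_2^{**}\times X_3^{**}$. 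Thus the two extensions in each of the three pairs coincide everywhere.

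Finally I would assemble the pieces. The hypothesized permutations $\tau,\rho,\eta$ take three distinct values at $1$, so they distribute one into each pair; each is identified with its pair-mate by the release step, and the three are identified with one another by hypothesis. Hence all six extensions agree on $X_1^{**}\times X_2^{**}\times X_3^{**}$, i.e.\ $f$ is Aron--Berner regular. The only genuinely delicate point is the bookkeeping in the second paragraph: one must check that ``distinct values at $1$'' is exactly the combinatorial condition forcing the triple to straddle all three Lemma \ref{B9} partitions, which is what makes the hypothesis both sufficient and (by the forward implication) necessary, and is also why the admissible triplets are precisely the eight listed before the statement.
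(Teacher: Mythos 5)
Your proof is correct, and it runs on exactly the same machinery as the paper's: the two-block coincidences of Lemma \ref{B9} on the three grounded subsets, the $w^{*}$-continuity of $f^{\sigma\circledast\sigma^{-1}}$ in the $\sigma(1)$-variable from Proposition \ref{B2}, and a Goldstine-net passage to the limit. The difference is organizational rather than conceptual. The paper fixes one representative triple, $(\sigma_1,\sigma_4,\sigma_5)$, and for each of the three remaining extensions writes out a four-step chain (continuity in the $\sigma(1)$-variable, Lemma \ref{B9} to jump within a block, the hypothesis, continuity again), declaring the other seven admissible triples ``similar.'' You instead insert a uniform intermediate statement --- under the hypothesis, all six extensions already coincide on each grounded subset --- justified by the combinatorial observation that every Lemma \ref{B9} block contains two permutations with the same value at $1$, so a triple with pairwise distinct values at $1$ can equal no block and must straddle both blocks of each partition; you then release the grounded variable pairwise, pairing the extensions by their common value at $1$ into $\{\sigma_0,\sigma_1\}$, $\{\sigma_3,\sigma_5\}$, $\{\sigma_2,\sigma_4\}$. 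This buys a single argument covering all eight admissible triples at once and makes explicit why ``distinct values at $1$'' is exactly the right hypothesis, at the cost of one extra layer of bookkeeping; the paper's computation is shorter for the single case it treats but leaves the remaining cases to the reader. Your limit passage in the release step is legitimate precisely because both members of each pair are $w^{*}$-$w^{*}$-continuous in the shared $\sigma(1)$-variable with the other two arguments held at arbitrary points of the biduals, which is the first clause of Proposition \ref{B2}, so no gap remains.
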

 \begin{proof}
The necessity is clear from definition. For the sufficiency,  according to the above comments, there are eight triples $(\tau,\rho,\eta)$ of permutations with different values at $1$. We only give the proof for the triple $(\tau,\rho,\eta)=(\sigma_1,\sigma_4,\sigma_5).$ Suppose that   $f^{\sigma_{1} \circledast \sigma_{1}}=f^{\sigma_{4} \circledast \sigma_{5}}=f^{\sigma_{5} \circledast \sigma_{4}} $.  We are going to show that $f^{\circledast}=f^{\sigma_{1} \circledast \sigma_{1}}$, $f^{\sigma_{3} \circledast \sigma_{3}}=f^{\sigma_{5} \circledast \sigma_{4}}$, and $f^{\sigma_{2} \circledast \sigma_{2}}=f^{\sigma_{4} \circledast \sigma_{5}}$.\medskip

For $i=1,2, 3$, let $x_{i}^{\ast\ast}\in X_{i}^{\ast\ast}$ and let $({x_i}_{\alpha})$ be a net in $X_i$, $w^*$-converging to  $x_{i}^{\ast\ast}$. Then
\begin{align}
f^{\circledast}(x_{1}^{\ast\ast},x_{2}^{\ast\ast},x_{3}^{\ast\ast})&=w^{\ast}\mbox{-}\lim_\alpha f^{\circledast}({x_1}_{\alpha},x_{2}^{\ast\ast},x_{3}^{\ast\ast}) \nonumber \tag{$\sigma_{0}(1)=1$}\\
&= w^{\ast}\mbox{-}\lim_\alpha f^{\sigma_{5} \circledast \sigma_{5}}({x_1}_{\alpha},x_{2}^{\ast\ast},x_{3}^{\ast\ast})\tag{Lemma \ref{B9}(a)}\\
&= w^{\ast}\mbox{-}\lim_\alpha f^{\sigma_{1} \circledast \sigma_{1}}({x_1}_{\alpha},x_{2}^{\ast\ast},x_{3}^{\ast\ast})\nonumber  \\
&= f^{\sigma_{1} \circledast \sigma_{1}}(x_{1}^{\ast\ast},x_{2}^{\ast\ast},x_{3}^{\ast\ast}), \tag{$\sigma_{1}(1)=1$}
\end{align}
\begin{align}
f^{\sigma_{3} \circledast \sigma_{3}}(x_{1}^{\ast\ast},x_{2}^{\ast\ast},x_{3}^{\ast\ast})&= w^{\ast}\mbox{-}\lim_\alpha f^{\sigma_{3} \circledast \sigma_{3}}(x_{1}^{\ast\ast},{x_2}_{\alpha},x_{3}^{\ast\ast})\tag{$\sigma_{3}(1)=2$} \\
&=w^{\ast}\mbox{-}\lim_\alpha f^{\sigma_{1} \circledast \sigma_{1}}(x_{1}^{\ast\ast},{x_2}_{\alpha},x_{3}^{\ast\ast})\tag{Lemma \ref{B9}(b)} \\
&=w^{\ast}\mbox{-}\lim_\alpha f^{\sigma_{5} \circledast \sigma_{5}}(x_{1}^{\ast\ast},{x_2}_{\alpha},x_{3}^{\ast\ast})\nonumber \\
&= f^{\sigma_{5} \circledast \sigma_{4}}(x_{1}^{\ast\ast},x_{2}^{\ast\ast},x_{3}^{\ast\ast}),\tag{$\sigma_{5}(1)=2$}
\end{align}
and finally
\begin{align}
f^{\sigma_{2} \circledast \sigma_{2}}(x_{1}^{\ast\ast},x_{2}^{\ast\ast},x_{3}^{\ast\ast})&= w^{\ast}\mbox{-}\lim_\alpha f^{\sigma_{2} \circledast \sigma_{2}}(x_{1}^{\ast\ast},x_{2}^{\ast\ast},{x_3}_{\alpha}), \tag{$\sigma_{2}(1)=3$} \\
&= w^{\ast}\mbox{-}\lim_\alpha f^{\sigma_{5} \circledast \sigma_{4}}(x_{1}^{\ast\ast},x_{2}^{\ast\ast},{x_3}_{\alpha}), \tag{Lemma \ref{B9}(c)} \\
&= w^{\ast}\mbox{-}\lim_\alpha f^{\sigma_{4} \circledast \sigma_{5}}(x_{1}^{\ast\ast},x_{2}^{\ast\ast},{x_3}_{\alpha})\nonumber \\
&=f^{\sigma_{4} \circledast \sigma_{5}}(x_{1}^{\ast\ast},x_{2}^{\ast\ast},x_{3}^{\ast\ast}).\tag{$\sigma_{4}(1)=3$}
\end{align}
\end{proof}

We are now ready to present the main result of this section characterizing the Aron--Berner regularity of a bounded trilinear map.

\begin{theorem}\label{c6}
For every bounded trilinear map $ f:X_{1} \times X_{2}\times X_{3} \longrightarrow Y$, the following statements are equivalent:
\begin{enumerate}[$(a)$]
  \item $f$ is Aron--Berner regular;
  \item  $f$  admits a norm preserving extension $F:X_{1}^{**} \times X_{2}^{**}\times X_{3}^{**} \longrightarrow Y^{**}$  which is separately $w^*$-continuous;
  \item $f^{\circledast\ast}(Y^{\ast}\times X_{1}^{\ast\ast}\times X_{2}^{\ast\ast})\subseteq X_{3}^{\ast}$ and
  $f^{\circledast\ast\ast}(X_{3}^{\ast\ast}\times Y^{\ast}\times X_{1}^{\ast\ast})\subseteq X_{2}^{\ast};$
  \item  For each $y^*\in Y^*$, the first Arens extension of the bilinear map $(\cdot,\cdot)\mapsto f^{\ast}(y^{\ast},\cdot,\cdot):X_1\times X_2\longrightarrow X_3^*$ is $X_3^*$-valued, and the operator $\cdot\mapsto f^{\ast\ast}(x_{3}^{\ast\ast},y^{\ast},\cdot): X_{1}\rightarrow X_{2}^{\ast}$ is weakly compact for every $y^*\in Y^*, x_{3}^{\ast\ast}\in X_{3}^{\ast\ast}$.
              \end{enumerate}
\end{theorem}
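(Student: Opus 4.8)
The plan is to funnel all four conditions through the single property that the canonical extension $f^{\circledast}$ is separately $w^\ast$-continuous on $X_1^{\ast\ast}\times X_2^{\ast\ast}\times X_3^{\ast\ast}$. The equivalence $(a)\Leftrightarrow(b)$ is furnished by Remark \ref{r new}: a norm preserving separately $w^\ast$-continuous extension must coincide with every Aron--Berner extension, while regularity of $f$ together with Proposition \ref{B2} forces $f^{\circledast}$ to be separately $w^\ast$-continuous. Since $f^{\circledast}$ is itself a norm preserving extension, $(b)$ is equivalent to the single assertion that $f^{\circledast}$ is separately $w^\ast$-continuous, and I would make this reformulation the hub of the argument.

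To prove $(b)\Leftrightarrow(c)$ I would first note that, by \eqref{identity}, the slice $x_1^{\ast\ast}\mapsto f^{\circledast}(x_1^{\ast\ast},x_2^{\ast\ast},x_3^{\ast\ast})$ is automatically $w^\ast$-$w^\ast$-continuous for all $x_2^{\ast\ast},x_3^{\ast\ast}$; hence separate $w^\ast$-continuity of $f^{\circledast}$ amounts to continuity in the second and third variables alone. Unwinding the definition \eqref{11} of the adjoint twice gives, for every $y^\ast\in Y^\ast$ and all $x_i^{\ast\ast}$,
\[\langle f^{\circledast\ast}(y^\ast,x_1^{\ast\ast},x_2^{\ast\ast}),x_3^{\ast\ast}\rangle=\langle y^\ast,f^{\circledast}(x_1^{\ast\ast},x_2^{\ast\ast},x_3^{\ast\ast})\rangle=\langle f^{\circledast\ast\ast}(x_3^{\ast\ast},y^\ast,x_1^{\ast\ast}),x_2^{\ast\ast}\rangle.\]
Because a functional in $X_3^{\ast\ast\ast}$ belongs to $X_3^{\ast}$ exactly when it is $w^\ast$-continuous on $X_3^{\ast\ast}$, the inclusion $f^{\circledast\ast}(Y^\ast\times X_1^{\ast\ast}\times X_2^{\ast\ast})\subseteq X_3^{\ast}$ says precisely that $x_3^{\ast\ast}\mapsto\langle y^\ast,f^{\circledast}(x_1^{\ast\ast},x_2^{\ast\ast},x_3^{\ast\ast})\rangle$ is $w^\ast$-continuous for all $y^\ast,x_1^{\ast\ast},x_2^{\ast\ast}$, i.e. $f^{\circledast}$ is $w^\ast$-continuous in the third variable; symmetrically the second inclusion encodes $w^\ast$-continuity in the second variable. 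As $w^\ast$-$w^\ast$-continuity of a $Y^{\ast\ast}$-valued map is tested against the functionals $y^\ast$, condition $(c)$ is verbatim the separate $w^\ast$-continuity of $f^{\circledast}$, which yields $(b)\Leftrightarrow(c)$.

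For $(c)\Leftrightarrow(d)$ I would reduce to bilinear maps. Fix $y^\ast\in Y^\ast$ and put $g_{y^\ast}:=f^\ast(y^\ast,\cdot,\cdot):X_1\times X_2\to X_3^\ast$; its successive adjoints are the $y^\ast$-slices of $f^{\ast\ast}$ and $f^{\ast\ast\ast}$, and a short computation with the iterated-limit formula \eqref{itrated} identifies the first Arens extension of $g_{y^\ast}$, paired against $x_3^{\ast\ast}$, with $\langle f^{\circledast}(x_1^{\ast\ast},x_2^{\ast\ast},x_3^{\ast\ast}),y^\ast\rangle$. Thus the first Arens extension of $g_{y^\ast}$ is $X_3^\ast$-valued for every $y^\ast$ if and only if the first inclusion of $(c)$ holds. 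For the remaining part, for each $x_3^{\ast\ast}\in X_3^{\ast\ast}$ I introduce the scalar form $\varphi_{y^\ast,x_3^{\ast\ast}}(x_1,x_2):=\langle x_3^{\ast\ast},f^\ast(y^\ast,x_1,x_2)\rangle$; its linearization $X_1\to X_2^\ast$ is exactly $x_1\mapsto f^{\ast\ast}(x_3^{\ast\ast},y^\ast,x_1)$, and its first Arens extension is $(x_1^{\ast\ast},x_2^{\ast\ast})\mapsto\langle f^{\circledast}(x_1^{\ast\ast},x_2^{\ast\ast},x_3^{\ast\ast}),y^\ast\rangle$. I then invoke the classical fact that a bounded bilinear form is Arens regular precisely when its linearization is weakly compact, together with the elementary observation that the first Arens extension of a bilinear form is $w^\ast$-continuous in its second variable exactly when the form is Arens regular. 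Applied to $\varphi_{y^\ast,x_3^{\ast\ast}}$, these show that the weak-compactness hypothesis of $(d)$ holds for all $y^\ast,x_3^{\ast\ast}$ if and only if $x_2^{\ast\ast}\mapsto\langle f^{\circledast}(x_1^{\ast\ast},x_2^{\ast\ast},x_3^{\ast\ast}),y^\ast\rangle$ is $w^\ast$-continuous for all $y^\ast,x_1^{\ast\ast},x_3^{\ast\ast}$, which is the second inclusion of $(c)$. Together with the identification of the first inclusion this yields $(c)\Leftrightarrow(d)$.

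The adjoint bookkeeping in the middle paragraph is routine. The hard part will be the last paragraph: I must check carefully that scalarizing by a fixed element of $X_3^{\ast\ast}$ commutes with the \emph{first} (that is, first-variable continuous) Arens construction, so that the first Arens extensions of $g_{y^\ast}$ and of $\varphi_{y^\ast,x_3^{\ast\ast}}$ are correctly matched with the $\sigma_0$-slice of $f^{\circledast}$ dictated by \eqref{itrated}; and I must apply the weak-compactness criterion to the correct linearization and in the correct variable, so that weak compactness of $x_1\mapsto f^{\ast\ast}(x_3^{\ast\ast},y^\ast,x_1)$ corresponds to $w^\ast$-continuity of $f^{\circledast}$ in its second variable and not its third. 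Keeping the three slots and their permutations aligned is the only genuine source of error.
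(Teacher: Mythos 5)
Your proof is correct, and much of its skeleton coincides with the paper's: the equivalence $(a)\Leftrightarrow(b)$ via Remark \ref{r new} is identical, and your treatment of $(c)\Leftrightarrow(d)$ — identifying the first Arens extension of $g_{y^\ast}=f^{\ast}(y^{\ast},\cdot,\cdot)$ with the slice $f^{\circledast\ast}(y^{\ast},\cdot,\cdot)$ and reaching the operator $T=f^{\ast\ast}(x_3^{\ast\ast},y^{\ast},\cdot)$ — is the paper's computation in a thin disguise, since your two classical facts (a bilinear form is Arens regular iff its linearization is weakly compact; the first Arens extension of a form is $w^{\ast}$-continuous in the second variable iff the form is regular) compose to exactly Gantmacher's theorem applied to $T$, which is how the paper phrases it. The genuine difference is how you close the cycle. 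The paper proves $(b)\Rightarrow(c)$ by a net argument and then handles $(c)\Rightarrow(a)$ as a separate step: it establishes $f^{\circledast}=f^{\sigma_{3}\circledast\sigma_{3}}=f^{\sigma_{4}\circledast\sigma_{5}}$ using Lemma \ref{B9}$(b)$,$(c)$ and concludes by the permutation criterion of Lemma \ref{c1} applied to the triple $(\sigma_0,\sigma_3,\sigma_4)$. You instead observe that, since $w^{\ast}$-continuity of $f^{\circledast}$ in the first variable is automatic from \eqref{identity} and Proposition \ref{B2}, the two inclusions in $(c)$ are a verbatim re-encoding of $w^{\ast}$-continuity of $f^{\circledast}$ in the third and second variables, so $(c)$ is \emph{equivalent} to separate $w^{\ast}$-continuity of $f^{\circledast}$, and $(c)\Rightarrow(a)$ then drops out of Remark \ref{r new} with no appeal to Lemmas \ref{B9} or \ref{c1} at all. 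Your route is shorter and makes the content of $(c)$ transparent; the paper's route has the side benefit of exhibiting the explicit coincidences among the extensions $f^{\sigma\circledast\sigma^{-1}}$ and of exercising the permutation machinery that feeds the topological-center discussion in Remark \ref{c2.2}. The delicate bookkeeping you flag in your last paragraph — matching the first Arens extensions of $g_{y^\ast}$ and of $\varphi_{y^{\ast},x_3^{\ast\ast}}$ with the $\sigma_0$-ordered iterated limits of $f^{\circledast}$, and pairing weak compactness of $T$ with continuity in the \emph{second} variable — checks out against \eqref{itrated}, so there is no gap.
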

\begin{proof}
$(a)\Leftrightarrow (b)$ This  follows straightforwardly from the discussion in Remark \ref{r new}.\smallskip

$(b)\Rightarrow (c)$ Since $f$ is Aron--Berner regular the extension $f^\circledast$ is separately $w^*$-continuous. To prove  $f^{\circledast\ast}(Y^{\ast}\times X_{1}^{\ast\ast}\times X_{2}^{\ast\ast})\subseteq X_{3}^{\ast}$, it is enough to show that  for every  $y^{\ast}\in Y^{\ast}, x_{1}^{\ast\ast}\in X_{1}^{\ast\ast}, x_{2}^{\ast\ast}\in X_{2}^{\ast\ast}$, the linear functional $f^{\circledast\ast}(y^{\ast}, x_{1}^{\ast\ast}, x_{2}^{\ast\ast}): X_{3}^{\ast\ast}\longrightarrow\mathbb C$ is $w^{*}$-continuous. Take a net   $({x_3}_\alpha^{**})$  in $X_3^{\ast\ast} $, $w^*$-converging  to $x_{3}^{\ast\ast}\in X_{3}^{\ast\ast}$. Then
\begin{align}
\lim_{\alpha}\langle f^{\circledast\ast}(y^{\ast},x_{1}^{\ast\ast},x_{2}^{\ast\ast}), {x_3}_\alpha^{**}\rangle
=\lim_{\alpha}\langle f^{\circledast}(x_{1}^{\ast\ast},x_{2}^{\ast\ast},{x_3}_\alpha^{**}), y^{\ast}\rangle \nonumber
&=\langle f^{\circledast}(x_{1}^{\ast\ast},x_{2}^{\ast\ast},x_{3}^{\ast\ast}),y^{\ast}\rangle \nonumber\\
&=\langle f^{\circledast\ast}(y^{\ast},x_{1}^{\ast\ast},x_{2}^{\ast\ast}),x_{3}^{\ast\ast}\rangle. \nonumber
\end{align}
Similarly,  $f^{\circledast\ast\ast}(x_{3}^{\ast\ast}, y^{\ast}, x_{1}^{\ast\ast}): X_{2}^{\ast\ast}\longrightarrow\mathbb C$ is $w^{*}$-continuous for every $y^{\ast}\in Y^{\ast}, x_{1}^{\ast\ast}\in X_{1}^{\ast\ast}, x_{3}^{\ast\ast}\in X_{3}^{\ast\ast}$, which implies that $f^{\circledast\ast\ast}(X_{3}^{\ast\ast}\times Y^{\ast}\times X_{1}^{\ast\ast})\subseteq X_{2}^{\ast}.$\smallskip

$(c)\Leftrightarrow(d)$ If we  set $h:=f^{\ast}(y^{\ast},\cdot,\cdot):X_1\times X_2\longrightarrow X_3^*$ and $T:=f^{\ast\ast}(x_{3}^{\ast\ast},y^{\ast},\cdot):X_{1}\rightarrow X_{2}^{\ast}$, then a direct verification reveals that
\[h^{***}(x_1^{**},x_2^{**})=f^{\circledast\ast}(y^{\ast},x_1^{**},x_2^{**})\ \mbox{and}\ T^{**}(x_1^{**})=f^{\circledast\ast\ast}(x_{3}^{\ast\ast},y^{\ast},x_1^{**})\qquad (x_1^{**}\in X_1^{**},x_2^{**}\in X_2^{**}).\]
Now the conclusion follows from the fact that $(c)$ holds if and only if $h^{***}$ is $X_3^{*}$-valued and $T^{**}$ is $X_2^*$-valued, and the latter one is equivalent to the weak compactness of $T.$ \medskip

$(c)\Rightarrow(a)$ Let $({x_3}_\alpha)$ be a net in $X_3$, $w^*$-converging  to $x_{3}^{\ast\ast}$ in $X_3^{**}$. Then
\begin{align*}
\langle f^{\circledast}(x_{1}^{\ast\ast},x_{2}^{\ast\ast},x_{3}^{\ast\ast}),y^{\ast}\rangle
&=\langle f^{\circledast\ast}(y^{\ast},x_{1}^{\ast\ast},x_{2}^{\ast\ast}),x_{3}^{\ast\ast}\rangle\\
&=\lim_{\alpha}\langle f^{\circledast\ast}(y^{\ast},x_{1}^{\ast\ast},x_{2}^{\ast\ast}),{x_{3}}_{\alpha}\rangle\tag{$f^{\circledast\ast}(y^{\ast},x_{1}^{\ast\ast},x_{2}^{\ast\ast})\in X_3^*$}\\
&=\lim_{\alpha}\langle f^{\circledast}(x_{1}^{\ast\ast},x_{2}^{\ast\ast},{x_{3}}_{\alpha}),y^{\ast}\rangle\\
&=\lim_{\alpha}\langle f^{\sigma_{4}\circledast\sigma_{5}}(x_{1}^{\ast\ast},x_{2}^{\ast\ast},{x_{3}}_{\alpha}),y^{\ast}\rangle\tag{Lemma \ref{B9}(c)}\\
&=\langle f^{\sigma_{4}\circledast\sigma_{5}}(x_{1}^{\ast\ast},x_{2}^{\ast\ast},x_{3}^{\ast\ast}),y^{\ast}\rangle.
\end{align*}
We now  suppose that $({x_2}_\alpha)$ is a  net in $X_2$, $w^*$-converging to $x_{2}^{\ast\ast}$ in $X_2^{**}$. Then
\begin{align*}
\langle f^{\circledast}(x_{1}^{\ast\ast},x_{2}^{\ast\ast},x_{3}^{\ast\ast}),y^{\ast}\rangle
&=\langle f^{\circledast\ast}(y^{\ast},x_{1}^{\ast\ast},x_{2}^{\ast\ast}),x_{3}^{\ast\ast}\rangle\\
&=\langle f^{\circledast\ast\ast}(x_{3}^{\ast\ast},y^{\ast},x_{1}^{\ast\ast}),x_{2}^{\ast\ast}\rangle\\
&=\lim_{\alpha}\langle f^{\circledast\ast\ast}(x_{3}^{\ast\ast},y^{\ast},x_{1}^{\ast\ast}),{x_{2}}_{\alpha}\rangle\tag{$f^{\circledast\ast\ast}(x_{3}^{\ast\ast}, y^{\ast},x_{1}^{\ast\ast})\in X_2^*$}\\
&=\lim_{\alpha}\langle f^{\circledast}(x_{1}^{\ast\ast},{x_{2}}_{\alpha},x_{3}^{\ast\ast}),y^{\ast}\rangle\\
&=\lim_{\alpha}\langle f^{\sigma_{3}\circledast\sigma_{3}}(x_{1}^{\ast\ast},{x_{2}}_{\alpha},x_{3}^{\ast\ast}),y^{\ast}\rangle\tag{Lemma \ref{B9}(b)}\\
&=\langle f^{\sigma_{3}\circledast\sigma_{3}}(x_{1}^{\ast\ast},x_{2}^{\ast\ast},x_{3}^{\ast\ast}),y^{\ast}\rangle.
\end{align*}
We therefore get $f^{\circledast}=f^{\sigma_{3}\circledast\sigma_{3}}=f^{\sigma_{4}\circledast\sigma_{5}}$. Since the permutations in the triple $(\sigma_0,\sigma_3,\sigma_4)$ have different values at $1$, the mapping $f$ is Aron--Berner regular by Lemma \ref{c1} and this completes the proof.
\end{proof}

Theorem \ref{c6}  is actually a trilinear analogue of  \cite[Theorem 2.1]{MV}, where the Arens regularity of a bounded bilinear map is investigated. 
Wider information on Arens regularity of bilinear maps and Banach algebras can be found in \cite{Ar,D} (see, also, \cite{MV}).

\begin{remark}\label{c2.2} Let $A$ be a Banach algebra, and let $\Box$ and $\lozenge$ denote the first and second Arens products on $A^{**}$, respectively. According to \cite[Definition 2.17]{DL}, the (associative) topological centers of $A^{**}$ are $$\mathfrak{Z}^{(1)}_t (A^{**}) = \left\{ m\in A^{**} : m \Box \cdot = m\lozenge \cdot \ \right\};$$
$$\mathfrak{Z}^{(2)}_t (A^{**}) = \left\{ m\in A^{**} : \cdot \Box m = \cdot\lozenge m \ \right\}.$$ Both topological centers are norm-closed subalgebras of $(A^{**},\Box)$ and $(A^{**},\lozenge)$, both contain  $A$ itself. Furthermore $A$ is Arens regular if and only if either $\mathfrak{Z}^{(1)}_t (A^{**}) = A^{**}$ or $\mathfrak{Z}^{(2)}_t (A^{**}) = A^{**}$.\smallskip

Based on Lemma \ref{c1} and inspired by the above definition of topological centers for associative Banach algebras,  to explore the separate w$^*$-continuity of the extension $f^{\sigma\circledast\sigma ^{-1}}$ of a bounded trilinear mapping in more extent, we define the corresponding topological  centers. Let us fix $j\in \{1,2,3\}$. For each $3$-tuple $(\sigma,\tau,\rho) \in S_{3}$ attaining different values at $1$ with $\sigma(1) =j$, we define the $j$th $(\sigma,\tau,\rho)$-topological center $Z_{\sigma,\tau,\rho}^{j}(f)$ of a trilinear mapping $ f:X_{1} \times X_{2}\times X_{3} \longrightarrow Y$ as follows:
$$ Z_{\sigma,\tau,\rho}^j(f) =\left\{ x_{\sigma(1)}^{**}\in X_{\sigma(1)}^{**}:
f^{\sigma\circledast\sigma ^{-1}}(x_{\sigma(1)}^{**},\cdot,\cdot) = f^{\rho\circledast\rho^{-1}}(x_{\sigma(1)}^{**},\cdot,\cdot)
=f^{\tau\circledast\tau^{-1}}(x_{\sigma(1)}^{**},\cdot,\cdot)\right\}.$$
We observe that  $Z_{\sigma,\tau,\rho}^j(f) = Z_{\sigma,\rho,\tau}^j(f)$ for every $\sigma, \rho,\tau$ under the above conditions.
It is easy to verify that  $Z_{\sigma,\tau,\rho}^j (f)$ is a closed subspace of  $X_{\sigma(1)}^{**}= X_{j}^{**}$.\smallskip

We can conclude from Lemma \ref{c1} that,   for a $3$-tuple $(\sigma,\tau,\rho)$ satisfying the above assumptions, the equality $Z_{\sigma,\tau,\rho}^j (f)=X_{j}^{**}$ is equivalent to the Aron--Berner regularity of $f,$ and that, in this desirable case, we also have  $Z_{\sigma',\tau',\rho'}^k(f)=X_{k}^{**}$ and $Z_{\sigma'',\tau'',\rho''}^l(f)=X_{l}^{**}$ for every $k\neq l$ in $\{1,2,3\}\backslash\{j\}$, ${\sigma',\tau',\rho',\sigma'',\tau'',\rho''}\in S_3$ attaining different values at $2$ and at $3$, respectively, with  $\sigma' (1) = k$ and $\sigma''(1) = l$.  Similar notions of ``topological centers" for a bilinear mapping and for the product of an associative Banach algebra were extensively discussed in the literature of Banach algebras (see, for example, \cite{D,DL} and references therein). We shall return to the notion of topological center of certain triple maps in Example \ref{example new topological centers}, where we shall illustrate that the topological centers for trilinear maps exhibit a different behavior than the one we have for associative topological centers.
\end{remark}

Before proceeding with an applicable consequence of Theorem \ref{c6}, we recall that a bounded linear map $T:X\longrightarrow Y$ is  weakly compact if and only if the second adjoint, $T^{**},$ of $T$ is $Y$-valued,  that is, $T^{**}(X^{**})\subseteq Y.$ However, we do not have this equivalence   in the context of multilinear maps. If  a bounded multilinear map from the Cartesian product of some normed spaces to a normed space $Y$ is weakly compact, then, as it is shown in \cite[Proposition 4]{CGV},  all of its Aron--Berner extensions to the bidual spaces are $Y$-valued; however, there are many nonweakly compact multilinear maps with $Y$-valued Aron--Berner extensions. For instance, such as  presented in \cite[Example 4]{CGV}, the bounded bilinear map $(x,y)\mapsto h(x,y)=p(x)p(y):\ell^\infty\times\ell^\infty\longrightarrow\ell^1,$ where $p:\ell^\infty\longrightarrow\ell^2$ is an arbitrary  surjective bounded  linear map, is nonweakly compact, while its Aron--Berner extension (or actually, its Arens extension \cite{Ar}) $h^{***}$ is $\ell^1$-valued (see aso \cite{BomVi}). For further information concerning multilinear maps whose Aron--Berner extensions are $Y$-valued, the reader can consult \cite{AG, GV, PVMY}. To prove Theorem \ref{c7}, we need the following lemma.

\begin{lemma}\label{c61} If every bounded  linear map from $X_i, (i=1,2),$ into $Y$ is weakly compact, then the Arens extension{\rm(}s{\rm)} of every  bounded bilinear map from $X_1\times X_2$ into $Y$ is $Y$-valued.
\end{lemma}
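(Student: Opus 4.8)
The plan is to reduce the statement to the classical characterization recalled just above the lemma, namely that a bounded linear operator is weakly compact precisely when its second adjoint is again valued in the range space. Write $g : X_{1} \times X_{2} \to Y$ for an arbitrary bounded bilinear map and let $g^{***} : X_{1}^{**} \times X_{2}^{**} \to Y^{**}$ denote its first Arens extension, obtained from the chain of adjoints $g^{*} : Y^{*} \times X_{1} \to X_{2}^{*}$ and $g^{**} : X_{2}^{**} \times Y^{*} \to X_{1}^{*}$ exactly as in subsection \ref{extension}. I would prove that $g^{***}(x_{1}^{**}, x_{2}^{**}) \in Y$ for all $x_{1}^{**} \in X_{1}^{**}$ and $x_{2}^{**} \in X_{2}^{**}$ in two steps, first fixing the left variable in $X_{1}$ and then sweeping it up to the whole bidual; the second Arens extension is then handled by the symmetric argument interchanging the roles of $X_{1}$ and $X_{2}$, which is precisely where both hypotheses $i=1,2$ get used.

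\emph{Step 1.} For a fixed $x_{1} \in X_{1}$, consider the bounded linear slice $g_{x_{1}} : X_{2} \to Y$, $x_{2} \mapsto g(x_{1}, x_{2})$. Unwinding the definition of the adjoints gives $(g_{x_{1}})^{*}(y^{*}) = g^{*}(y^{*}, x_{1})$, and hence, pairing against $y^{*} \in Y^{*}$,
\[
\langle g^{***}(x_{1}, x_{2}^{**}), y^{*} \rangle = \langle x_{2}^{**}, g^{*}(y^{*}, x_{1}) \rangle = \langle (g_{x_{1}})^{**}(x_{2}^{**}), y^{*} \rangle,
\]
so that $g^{***}(x_{1}, \cdot) = (g_{x_{1}})^{**}$. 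By hypothesis every bounded operator from $X_{2}$ into $Y$ is weakly compact, so $(g_{x_{1}})^{**}$ is $Y$-valued; this already yields $g^{***}(X_{1} \times X_{2}^{**}) \subseteq Y$.

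\emph{Step 2.} Now fix $x_{2}^{**} \in X_{2}^{**}$ and define $S_{x_{2}^{**}} : X_{1} \to Y$ by $S_{x_{2}^{**}}(x_{1}) := g^{***}(x_{1}, x_{2}^{**})$; by Step 1 this is well defined, and it is clearly linear and bounded with $\|S_{x_{2}^{**}}\| \le \|g\|\, \|x_{2}^{**}\|$. The hypothesis for $i=1$ makes $S_{x_{2}^{**}}$ weakly compact, whence $(S_{x_{2}^{**}})^{**} : X_{1}^{**} \to Y^{**}$ is $Y$-valued. It then remains to identify $g^{***}(\cdot, x_{2}^{**})$ with $(S_{x_{2}^{**}})^{**}$. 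Both are maps $X_{1}^{**} \to Y^{**}$ agreeing with $S_{x_{2}^{**}}$ on $X_{1}$, and both are $w^{*}$-$w^{*}$-continuous: the second adjoint of a bounded operator always is, while $g^{***}(\cdot, x_{2}^{**})$ is so by the defining property of the first Arens extension (the bilinear analogue of Proposition \ref{B2}), since $\langle g^{***}(x_{1}^{**}, x_{2}^{**}), y^{*} \rangle = \langle x_{1}^{**}, g^{**}(x_{2}^{**}, y^{*}) \rangle$ is $w^{*}$-continuous in $x_{1}^{**}$. As $X_{1}$ is $w^{*}$-dense in $X_{1}^{**}$ by Goldstine's theorem, the two continuous maps coincide throughout, giving $g^{***}(x_{1}^{**}, x_{2}^{**}) = (S_{x_{2}^{**}})^{**}(x_{1}^{**}) \in Y$.

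The only genuinely delicate point, and the step I expect to be the main obstacle, is this last passage from $X_{1} \times X_{2}^{**}$ to the full $X_{1}^{**} \times X_{2}^{**}$: one cannot apply weak compactness to the bilinear map directly, but must first manufacture the honest $Y$-valued linear operator $S_{x_{2}^{**}}$ out of the already-proven partial result, and then recover the whole extension as its second adjoint by combining the $w^{*}$-continuity of the first Arens extension in its leading variable with Goldstine density. All the remaining verifications — boundedness, linearity, and the adjoint identities — are routine.
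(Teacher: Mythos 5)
Your proof is correct and follows essentially the same route as the paper's: slice in the first variable to identify $g^{***}(x_1,\cdot)$ with $(g_{x_1})^{**}$ and get $g^{***}(X_1\times X_2^{**})\subseteq Y$, then form the honest operator $S_{x_2^{**}}:X_1\to Y$ and use its weak compactness to conclude $g^{***}(x_1^{**},x_2^{**})=(S_{x_2^{**}})^{**}(x_1^{**})\in Y$. The only difference is that you spell out the identification $g^{***}(\cdot,x_2^{**})=(S_{x_2^{**}})^{**}$ via $w^*$-continuity and Goldstine density, a step the paper's proof passes over with the words ``which forces''; this is a welcome clarification rather than a deviation.
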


\begin{proof} For a bounded bilinear map $h:X_1\times X_2\longrightarrow Y$, satisfying the hypotheses of the lemma, the bounded linear map $\cdot\mapsto T=h(x_1,\cdot):X_2\longrightarrow Y, (x_1\in X_1)$ is weakly compact, we get $h^{***}(x_1,x_2^{**})=T^{**}(x_2^{**})\in Y$ for each $x_2^{**}\in X_2^{**}.$  Again by the assumption, the bounded linear map $\cdot\mapsto S=h^{***}(\cdot,x_2^{**}):X_1\longrightarrow Y$ is weakly compact, which forces $h^{***}(x_1^{**},x_2^{**})=S^{**}(x_1^{**})\in Y,$ for each $x_1^{**}\in X_1^{**}$,  as claimed.
\end{proof}

The  preceding lemma together with  Theorem \ref{c6} lead us to rediscover a result due to Bombal and Villanueva (see \cite[Theorem 1]{BomVi}). The proof is slightly different here.

\begin{theorem}\label{c7} \cite[Theorem 1]{BomVi}
Let  $ f:X_{1} \times X_{2}\times X_{3} \longrightarrow Y$ be a bounded trilinear map. If for all $i\neq j$ every bounded linear map from $X_i$ into $X_j^*$ is weakly compact, then $f$ is Aron--Berner regular.
\end{theorem}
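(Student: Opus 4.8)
The plan is to verify the hypotheses of Theorem \ref{c6}(d), which characterizes Aron--Berner regularity in terms of a weak-compactness condition on a derived operator together with an $X_3^*$-valuedness condition on an Arens extension. Concretely, for each fixed $y^*\in Y^*$ I set $h:=f^*(y^*,\cdot,\cdot):X_1\times X_2\longrightarrow X_3^*$ and, for each fixed $x_3^{**}\in X_3^{**}$, I consider the operator $T:=f^{**}(x_3^{**},y^*,\cdot):X_1\longrightarrow X_2^*$. The goal is to show that $h$ has $X_3^*$-valued Arens extension and that $T$ is weakly compact, and then invoke the equivalence $(d)\Leftrightarrow(a)$ to conclude.

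The key observation is that the hypothesis is tailor-made for Lemma \ref{c61}. Since every bounded linear map from $X_i$ into $X_j^*$ is weakly compact for all $i\neq j$, the bilinear map $h:X_1\times X_2\longrightarrow X_3^*$ has its domain factors mapping weakly compactly into the target $X_3^*$: indeed every bounded linear operator from $X_1$ into $X_3^*$ and from $X_2$ into $X_3^*$ is weakly compact by hypothesis (taking $(i,j)=(1,3)$ and $(2,3)$). Thus Lemma \ref{c61}, applied with $Y$ replaced by $X_3^*$, guarantees that the Arens extension $h^{***}$ is $X_3^*$-valued. This disposes of the first half of condition $(d)$.

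For the second half I must show that $T=f^{**}(x_3^{**},y^*,\cdot):X_1\longrightarrow X_2^*$ is weakly compact for every $y^*\in Y^*$ and $x_3^{**}\in X_3^{**}$. But this is immediate: $T$ is a bounded linear map from $X_1$ into $X_2^*$, and taking $(i,j)=(1,2)$ in the hypothesis, every such map is weakly compact. Hence $T$ is weakly compact with no further work. Combining the two parts, condition $(d)$ of Theorem \ref{c6} holds, so $f$ is Aron--Berner regular.

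I do not expect any serious obstacle here; the statement is essentially a direct corollary of the machinery already assembled. The only point requiring a modicum of care is the correct bookkeeping of which pairs $(i,j)$ feed into Lemma \ref{c61} versus the weak-compactness of $T$: the bilinear-map half needs the maps into $X_3^*$ (that is, $(1,3)$ and $(2,3)$), while the operator half needs $(1,2)$. Since the hypothesis asserts weak compactness for \emph{all} $i\neq j$, every case is covered, and there is nothing delicate beyond matching the indices to the two conditions in Theorem \ref{c6}(d).
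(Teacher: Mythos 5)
Your proposal is correct and follows essentially the same route as the paper: both verify condition $(d)$ of Theorem \ref{c6} by applying Lemma \ref{c61} (with target $X_3^*$, using the pairs $(1,3)$ and $(2,3)$) to show the Arens extension of $f^*(y^*,\cdot,\cdot)$ is $X_3^*$-valued, and then invoke the hypothesis with $(i,j)=(1,2)$ to get weak compactness of $f^{**}(x_3^{**},y^*,\cdot):X_1\to X_2^*$. Your index bookkeeping matches the paper's argument exactly, so there is nothing to add.
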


\begin{proof}
Since all bounded linear maps $X_1\longrightarrow X_3^*$ and $X_2\longrightarrow X_3^*$ are weakly compact, Lemma \ref{c61} implies that  the bilinear map  $(\cdot,\cdot)\mapsto f^{\ast}(y^{\ast},\cdot,\cdot):X_1\times X_2\longrightarrow X_3^*$ has $X_3^*$-valued Arens extension for every $y^*\in Y^*$. Further,  by hypothesis, the bounded linear map  $\cdot\mapsto f^{\ast\ast}(x_{3}^{\ast\ast},y^{\ast},\cdot): X_{1}\rightarrow X_{2}^{\ast}$  is also weakly compact for every fixed  $y^*\in Y, x_{3}^{\ast\ast}\in X_{3}^{\ast\ast}$. The conclusion now follows from Theorem \ref{c6}(d).
\end{proof}

\begin{remark}
One can compare Theorem \ref{c7} with the quoted result by Bombal and Villanueva. Theorem 1 in \cite{BomVi} proves the following: Let $X_{1},\ldots ,X_{k},$ and $X$ be Banach spaces satisfying that for each $i\ne j$ in $\{1,\ldots,k\}$ every bounded linear operator from $X_i$ into $X_j^{*}$ is weakly compact (this condition is satisfied whenever $X_1,\ldots , X_k$ have
property $(V)$ of Pelczy\'{n}ski, in particular when every $X_i$ is a C$^*$-algebra \cite[Corollary 6]{Pfi94} or a JB$^*$-triple \cite{ChuMe97}). Then every multilinear map $f:X_1\times\cdots\times X_k\to Y$ admits a (unique) separately $w^*$-continuous extension from $X_1^{**}\times\cdots\times X_k^{**}$ to  $Y^{**}$. Consequently, $f$ is Aron--Berner regular by Theorem \ref{c6}. A careful look, however, shows that the proof of Theorem \ref{c7} is a bit simpler. The same holds for multi-sesquilinear maps. In the case of JB$^*$-triples the hypotheses can be also derived from \cite[Lemma 5]{CIL} and in the setting of real JB$^*$-triples from \cite[Lemma 5]{PP}.\smallskip
\end{remark}

A Banach space $X$ is called regular if every bounded linear map from $X$ into $X^*$ is weakly compact (see \cite{L}). As we have mentioned in the above remark,  Banach spaces  having property $(V)$ of Pelczy\'{n}ski, in particular C$^*$-algebras and JB$^*$-triples, are the main examples of regular Banach spaces.\smallskip

As an immediate consequence of Theorem \ref{c7} we present the following result.
\begin{corollary}\label{regular space}Every bounded trilinear map on a regular normed  space is Aron--Berner regular. More precisely, if $X$ is regular,  then every bounded trilinear map $f:X\times X\times X\longrightarrow X$ is Aron--Berner regular.
\end{corollary}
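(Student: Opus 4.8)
The plan is to derive Corollary \ref{regular space} directly from Theorem \ref{c7}, treating the corollary as the special case where all three domain spaces coincide with a single regular space $X$. First I would recall the definition of regularity: a Banach space $X$ is regular precisely when every bounded linear map $T:X\to X^{*}$ is weakly compact. For the trilinear map $f:X\times X\times X\to X$ in the statement, the hypotheses of Theorem \ref{c7} require that for all $i\neq j$ in $\{1,2,3\}$ every bounded linear operator from $X_i$ into $X_j^{*}$ be weakly compact. Since here $X_1=X_2=X_3=X$, each of these six conditions reduces to the single assertion that every bounded linear operator from $X$ into $X^{*}$ is weakly compact, which is exactly the hypothesis that $X$ is regular.

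The main step is therefore simply to observe that regularity of $X$ supplies the full hypothesis of Theorem \ref{c7}. Once this is noted, the conclusion that $f$ is Aron--Berner regular follows immediately by invoking Theorem \ref{c7}. There is no genuine obstacle here; the corollary is a clean specialization, and the only thing to verify is that the indexing collapse ($X_i=X$ for all $i$) makes the weak compactness condition for every pair $(i,j)$ coincide with the defining property of a regular space.

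For completeness I would also record why regular spaces are abundant: as noted in the preceding remark, any Banach space with Pelczy\'{n}ski's property $(V)$ is regular, and in particular every C$^{*}$-algebra and every JB$^{*}$-triple is regular. This connects the corollary to the applications in the next section, where the regularity of JB$^{*}$-triples will be used to equip their biduals with a well-defined Aron--Berner triple product. Thus the proof is essentially a one-line deduction from Theorem \ref{c7}, with the substantive content residing in that theorem and in Lemma \ref{c61} on which it depends.
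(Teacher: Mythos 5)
Your proposal is correct and matches the paper's own treatment: the corollary is presented there precisely as an immediate specialization of Theorem \ref{c7} to $X_1=X_2=X_3=Y=X$, with the regularity of $X$ (every bounded linear map $X\to X^{*}$ weakly compact) supplying all the weak compactness hypotheses at once. Your observation that the six pairwise conditions collapse to the single defining property of a regular space is exactly the intended one-line deduction.
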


\section{Biduals of Jordan Banach triple systems  and JB$^{\ast}$-triples}\label{second}

We first recall some basic notions and include some auxiliary results on Jordan Banach triple systems and JB$^*$-triples. Let $E$ be a complex (respectively, real) linear space. A complex (respectively, real) triple product on $E$ is a mapping
$$\pi:E \times E \times E \rightarrow E,\quad \pi(a,b,c)=\{a,b,c\} \quad (a,b,c\in E)$$
which is bilinear and symmetric in the outer variables and conjugate linear  (respectively, linear) in the middle one, satisfying the so-called Jordan identity
\begin{align}\label{Jordan Identity}
  \{a,b,\{c,d,e\}\} = \{\{a,b,c\},d,e\} - \{c,\{b,a,d\},e\} + \{c,d,\{a,b,e\}\}\qquad (a, b, c, d, e\in E).
\end{align} The pair $(E,\pi)$ is called a \emph{Jordan triple system}.\smallskip


If $E$ is a (real or complex) Banach space and the triple product is jointly continuous then the pair $(E,\pi)$ is called a \emph{Jordan Banach triple system}. A  JB$^*$-triple is a complex Jordan Banach triple system $E$ satisfying the following axioms:
\begin{enumerate}[$\bullet$]
  \item For any $a$ in $E$, the mapping $\cdot\mapsto \{a,a,\cdot\}$ is a Hermitian operator on $E$ with nonnegative spectrum;
  \item $\|\{a, a, a\}\| = \| a \|^3$ for all $a$ in $E$.
\end{enumerate}\smallskip
A Jordan Banach triple system $E$ is called abelian if the identity
  \begin{align}\label{abelian}
 \{a,b,\{c,d,e\}\}=\{\{a,b,c\},d,e\}\quad (a,b,c,d,e\in E),
 \end{align}
 is satisfied. In view of the Jordan identity \eqref{Jordan Identity}, however, the foregoing identity \eqref{abelian} is equivalent to the identity $\{a,b,\{c,d,e\}\}=\{a,\{b,c,d\},e\}.$\smallskip

A  subtriple of a Jordan Banach triple system $E$ is a subspace $F$ of $E$ satisfying $\{F, F, F\} \subseteq F$.\smallskip

For the sake of convenience, in the rest of the paper, we will be dealing with  complex Jordan Banach triple systems.\smallskip

For example, every Banach $^*$-algebra is a Jordan Banach triple system with respect to the triple product $\{a, b, c\} = \frac{1}{2}(ab^*c+cb^*a),$ and the same triple product equips every C$^*$-algebra and the Banach space $B(H,K),$ of all bounded linear operators between
two  Hilbert spaces $H$ and $K$, with a structure of JB$^*$-triple. A JB$^*$-algebra, with Jordan product $\circ$, is a  JB$^*$-triple under the triple product $\{a, b, c\} = (a \circ b^*) \circ c+(c \circ b^*) \circ a - (a \circ c) \circ b^*$, and a Hilbert space $H$ with inner product $\langle\cdot,\cdot\rangle$, is a JB$^*$-triple when endowed with the triple product  $\{a, b, c\} = \frac{1}{2}(\langle a,b\rangle c+\langle c,b\rangle a).$ \smallskip

A JB$^*$-triple which is a dual Banach space is called a JBW$^*$-triple. It is well known that the triple product of a JBW$^*$-triple is separately $w^*$-continuous \cite{BT} (this conclusion also holds for real JB$^*$-triples, see \cite{martinez2000separate}). 
We refer the readers to \cite{russo1994structure} and \cite{Ch} for the basic background  on Jordan Banach triple systems and JB$^*$-triples.

\subsection{Aron--Berner triple products on the bidual of Jordan Banach triple systems}\label{second1}
Let $E$ be a Jordan Banach triple system equipped with the triple product $\pi=\{\cdot,\cdot,\cdot\}:E\times E\times E\to E.$ Following the procedure described in subsection \ref{extension}, we can find six, generally different, norm preserving extensions of $\pi$ to $E^{**}\times E^{**}\times E^{**}$ defined by
\begin{align}\label{conju}
\pi^0=\pi^{*{\bar{*}}{\bar{*}}*}, \pi^1=\pi^{\sigma_1{\bar{*}}{\bar{*}}**\sigma_1}, \pi^2=\pi^{\sigma_2*{\bar{*}}{\bar{*}}*\sigma_2}, \pi^3=\pi^{\sigma_3**{\bar{*}}{\bar{*}}\sigma_3}, \pi^4=\pi^{\sigma_4{\bar{*}}{\bar{*}}**\sigma_5}, \mbox{and } \pi^5:=\pi^{\sigma_5**{\bar{*}}{\bar{*}}\sigma_4},
\end{align}
in which,  for the sake of conjugate linearity in the middle variable of each item, we used the conjugate adjoint ``$ \bar{*} $"  instead of  the usual adjoint
``$ * $" in some stages. Indeed, in the case when $f:X\times X\times X \longrightarrow X$ is a continuous map which is conjugate linear in the third variable, we define the conjugate transpose $f^{\bar{*}}:X^*\times X\times X\to X^*$ of $f$ by
\begin{align*}
 \langle f^{\bar{*}}(x^*,a,b),c\rangle=\overline{\langle x^*,f(a,b,c)\rangle}\quad  (a,b,c\in X, x^*\in X^*).
  \end{align*}
It is worthwhile mentioning that, taking conjugation does not affect the $w^*$-continuity of the extensions presented in \eqref{conju}, and they  enjoy the same $w^*$-continuity property as  described in Proposition \ref{B2}. We can consider by this procedure multi-sesquilinear maps.\smallskip

Following Definition \ref{B5}, we say that a Jordan Banach triple system $(E, \pi)$ is Aron--Berner regular if all the six extensions of $\pi$ presented in \eqref{conju} coincide on $E^{**}\times E^{**}\times E^{**}.$ In Example \ref{not} below we shall exhibit a Jordan Banach triple system which is not Aron--Berner regular; however, in the more favorable setting of JB$^*$-triples, a consequence of Corollary \ref{regular space} proves the next result.

\begin{corollary}\label{regularity}
Every JB$^*$-triple is Aron--Berner regular.
\end{corollary}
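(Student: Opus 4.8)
The plan is to reduce the claim to Corollary \ref{regular space}, which asserts that every bounded trilinear map on a regular normed space is Aron--Berner regular. Thus the only real work is to verify that a JB$^*$-triple $E$, viewed as a Banach space, is regular in the sense defined just before Corollary \ref{regular space}: every bounded linear map from $E$ into $E^*$ is weakly compact. Once this is established, the triple product $\pi:E\times E\times E\to E$ is a bounded trilinear (more precisely, multi-sesquilinear) map on a regular space, and Corollary \ref{regular space} immediately yields its Aron--Berner regularity. I would therefore open the proof by recalling the definition of regularity and stating that it suffices to check $E$ is regular.

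For the regularity of $E$, the cleanest route is to invoke the property $(V)$ of Pelczy\'{n}ski for JB$^*$-triples, which is exactly the input highlighted in the remark following Theorem \ref{c7}. As recorded there, every JB$^*$-triple enjoys property $(V)$ by \cite{ChuMe97} (with the relevant estimate also available through \cite[Lemma 5]{CIL}), and a Banach space with property $(V)$ has the feature that every bounded linear operator into its dual --- indeed into any dual space --- is weakly compact. I would cite this directly rather than reprove it, since the excerpt has already placed these references at our disposal and explicitly notes that property $(V)$ implies the weak-compactness hypothesis for linear maps into duals. Hence every bounded linear map $T:E\to E^*$ is weakly compact, which is precisely the assertion that $E$ is a regular Banach space.

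With regularity in hand, the conclusion is one line: apply Corollary \ref{regular space} to the triple product $\pi$ on the regular space $E$ to deduce that $\pi$ is Aron--Berner regular, i.e.\ all six extensions $\pi^0,\ldots,\pi^5$ of \eqref{conju} coincide on $E^{**}\times E^{**}\times E^{**}$. One technical caveat worth a sentence is that $\pi$ is conjugate-linear in the middle variable, so it is multi-sesquilinear rather than strictly trilinear; I would point out that the entire machinery of Section \ref{first}, and in particular Corollary \ref{regular space}, carries over verbatim to the multi-sesquilinear setting, as the paper repeatedly remarks (see Remark \ref{r new} and the discussion after Theorem \ref{c7}), because conjugating a variable does not disturb any of the $w^*$-continuity arguments.

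The main obstacle, such as it is, lies not in the logical structure --- which is essentially a direct appeal to Corollary \ref{regular space} --- but in correctly identifying and citing the fact that JB$^*$-triples are regular Banach spaces. The subtlety is that regularity is an external Banach-space property of $E$, deduced from deep structural results on JB$^*$-triples (property $(V)$ via \cite{ChuMe97,CIL}), and one must take care that the weak-compactness conclusion is stated for all bounded linear maps $E\to E^*$, matching exactly the definition of regularity used in the corollary. Provided that citation is made precisely, the proof is short and essentially formal.
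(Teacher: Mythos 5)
Your proposal is correct and follows exactly the paper's own route: the paper derives Corollary \ref{regularity} as an immediate consequence of Corollary \ref{regular space}, using the fact (recorded in the remark after Theorem \ref{c7}) that JB$^*$-triples have Pelczy\'{n}ski's property $(V)$ by \cite{ChuMe97} (alternatively \cite[Lemma 5]{CIL}) and are therefore regular Banach spaces. Your explicit note that the triple product is multi-sesquilinear rather than trilinear, and that the machinery extends verbatim, matches the paper's own remarks (Remark \ref{r new}, the remark after Theorem \ref{c7}, and the comments in subsection \ref{second1} on the conjugate adjoint), so nothing is missing.
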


It should be remarked that, not only the triple product of a JB$^*$-triple  is Aron--Berner regular, but also every multi-sesquilinear map 
 on a JB$^*$-triple is Aron--Berner regular.\smallskip

Let $(E,\pi)$ be a Jordan Banach triple system. Our main objective in the rest of this section is to find sufficient conditions to guarantee that the bidual, $E^{**},$ of $E$ is a Jordan Banach triple system when it is equipped with some of the triple extensions $\pi^i$ for $i\in\{0,\ldots,5\}$. For this purpose,  we first note that, by construction, each of the assignments $\pi^i:E^{**}\times E^{**}\times E^{**}\to E^{**}$ in \eqref{conju} is bilinear in the outer variables and conjugate linear in the middle one. However, as our next example shows,  $\pi^i$ is not, in general, symmetric in the outer variables. 
The construction exhibited here is based on the well-known fact that the bidual of a commutative Banach algebra need not be, in general, commutative, and it is the case if and only if the original algebra is Arens regular (see \cite{Ar}).

\begin{example}\label{not}
Let $E$ be a commutative unital Banach $^*$-algebra which is not Arens regular (e.g., the convolution group algebra $(\ell^1(\mathbb Z),\ast)$, see \cite[Examples 2.6.22]{D}). Since $E$ is commutative, it is easy to check that $m\Box n=n\lozenge m,$ for all $m,n\in E^{**},$ where $\Box$ and $\lozenge$ stand for the first and second Arens products, respectively. Since $E$ is not Arens regular $\Box$ and $\lozenge$ are not commutative (see \cite{Ar} or \cite[comments after Corollary 2.6.18]{D}).\smallskip

Then $E$ can be viewed as a Jordan Banach triple system equipped with the triple product $\pi(a,b,c)=ab^*c.$ A direct verification, applying that $\Box$ and $\lozenge$ are associative (cf. \cite[Proposition A.3.53]{D}), reveals that for every $m,n,p\in E^{**}$ we have
\begin{equation*}
\begin{aligned}[c]
\pi^0(m,n,p)&=m\Box n^*\Box p,\\
\pi^1(m,n,p)&=m\Box (n^*\lozenge p),\\
\pi^3(m,n,p)&=(m\lozenge n^*)\Box p,
\end{aligned}
\ \ \ \ \ \ \ \ \ \
\begin{aligned}[c]
\pi^2(m,n,p)&=m\lozenge n^*\lozenge p,\\
\pi^4(m,n,p)&=(m\Box n^*)\lozenge p, \\
\pi^5(m,n,p)&=m\lozenge (n^* \Box p),
\end{aligned}
\end{equation*}
where $n^*$ stands for the involution of $n$ naturally induced by the involution of $E,$ via the assignments: $\langle n^*,\phi\rangle:=\overline{\langle n,\phi^*\rangle}$ and $\langle \phi^*,a\rangle:=\overline{\langle\phi,a^*\rangle}$ for $a\in E, \phi\in E^*.$ For example, let us take, via Goldstine's theorem, three nets $(m_{\alpha_1})$, $(n_{\alpha_2}^*)$ and  $(p_{\alpha_3})$ in $E$ converging in the $w^*$-topology of $E^{**}$ to $m,n,p\in E^{**},$ respectively. Since the mappings $\cdot \Box m$, $a\Box \cdot$, $m\lozenge \cdot$, and $\cdot\lozenge a$ are $w^*$-continuous for all $m\in E^{**}$, $a\in E,$ it can be easily seen that $$\pi^0 (m,n,p) = w^*\hbox{-}\lim_{\alpha_1} \lim_{\alpha_2} \lim_{\alpha_3}\  m_{\alpha_1} \Box n_{\alpha_2}^*\Box p_{\alpha_3}= w^*\hbox{-}\lim_{\alpha_1} \lim_{\alpha_2} \ m_{\alpha_1} \Box n_{\alpha_2}^*\Box p$$
$$= w^*\hbox{-}\lim_{\alpha_1} \ m_{\alpha_1} \Box n^*\Box p = m\Box n^* \Box p,$$

$$\pi^1 (m,n,p) = w^*\hbox{-}\lim_{\alpha_1} \lim_{\alpha_3} \lim_{\alpha_2}\  m_{\alpha_1} \Box (n_{\alpha_2}^*\lozenge p_{\alpha_3})= w^*\hbox{-}\lim_{\alpha_1} \lim_{\alpha_3} \ m_{\alpha_1} \Box (n^*\lozenge p_{\alpha_3})$$
$$= w^*\hbox{-}\lim_{\alpha_1} \ m_{\alpha_1} \Box (n^*\lozenge p) = m\Box (n^* \lozenge p),$$
and so on for the other identities.\smallskip

We thus conclude that
\begin{enumerate}[$(a)$]
\item $(E,\pi)$ is not Aron--Berner regular, indeed $\pi^0\neq\pi^2$. Namely, having in mind that $E$ is unital and $\Box$ is not commutative, we can find $m,p\in E^{**}$ with $\pi^0 (m,1,p) = m\Box p \neq p\Box m= m\lozenge p = \pi^2 (m,1,p)$;
\item $\pi^0$ does not obey the Jordan identity. To clarify this, we examine the Jordan identity \eqref{Jordan Identity} with   $m, n\in E^{**}$ instead of $a,b,$ and $1$ (=the identity of $E$) for the other components $c,d,e.$ Since $(x\Box y)^* = y^* \lozenge x^*$, the Jordan identity holds if and only if $m\Box n^*=n^*\Box m$, and this is not the case, because $E$ is not Arens regular as a Banach algebra. The same conclusion applies to $\pi^1,$ $\pi^2,$ $\pi^3$, $\pi^4$ and $\pi^5$;
\item Neither of $\pi^0$, $\pi^1,$ $\pi^2,$ $\pi^3$, $\pi^4$ nor $\pi^5$ is symmetric in the outer variables;
\item In this example $\pi^0 (m,n,p) = \pi^2 (p,n,m),$ $\pi^1 (m,n,p) = \pi^4 (p,n,m),$ and $\pi^3 (m,n,p) = \pi^5 (p,n,m)$ for all $m,n,p\in E^{**}$.
\end{enumerate}
\end{example}

\begin{example}\label{example new topological centers} For each Banach space $X$, let $\kappa_{_X}$ denote the canonical inclusion of $X$ into $X^{**}$. Let $A= K(c_0)$ denote the Banach algebra of all compact operators on the Banach space $c_0$ of all null sequences equipped with the composition product. Arguing as in \cite[Example 2.5]{LauUl96} or \cite[Example 6.2 and comments before]{DL}, we deduce that $A^{**} =B(\ell_{\infty}),$ and furthermore the first and second Arens products on $B(\ell_{\infty})$ are given by \begin{equation}\label{eq Arens extension new example} m\Box n = mn, \hbox{ and } m\lozenge n = \mathcal{Q}(m) n, \hbox{ for all } m,n\in B(\ell_{\infty})
 \end{equation} respectively, where $\mathcal{Q}$ is a contractive projection on $B(\ell_{\infty})$ defined in the following way: given $a\in B(\ell_{\infty})$  we set $$\mathcal{Q}(a) =\kappa_{\ell_1}^* a^{**} \kappa_{c_0}^{**}\in B(\ell_\infty).$$ It is known that $\mathcal{Q}(A^{**})=\{ z^* : z\in B(\ell_1) \}$, $\mathcal{Q}(Id_{\ell_{\infty}}) = Id_{\ell_{\infty}},$ $\ker(\mathcal{Q}) = \{ a\in B(\ell_{\infty}) : a \kappa_{c_0} = 0\}$, $$\mathcal{Q}(b) a = b a, \, \forall b\in B(\ell_{\infty}) \Longleftrightarrow a (\ell_\infty) \subseteq c_0,$$
$$ a b= \mathcal{Q}(a) b , \, \forall b\in B(\ell_{\infty}) \Longleftrightarrow \mathcal{Q}(a) =a, \hbox{ and }$$ (compare \cite[Example 6.2]{DL}) and from the associativity of $\lozenge$ we get $\mathcal{Q}(a \mathcal{Q}(b))= \mathcal{Q}(a) \mathcal{Q}(b)$.\smallskip

Let $f: A\times A\times A\to A$ be the trilinear mapping defined by $f(a,b,c) = a b c$. In this case $$Z^1_{\sigma_0,\sigma_2,\sigma_3} (f)  = \left\{ m\in B(\ell_{\infty}) :  f^{\sigma_0\circledast\sigma_0^{-1}}(m,\cdot,\cdot) = f^{\sigma_2\circledast\sigma_2^{-1}}(m,\cdot,\cdot)
=f^{\sigma_3\circledast\sigma_3^{-1}}(m,\cdot,\cdot) \right\},$$
$$Z^2_{\sigma_0,\sigma_1,\sigma_3} (f)  = \left\{ n\in B(\ell_{\infty}) :  f^{\sigma_0\circledast\sigma_0^{-1}}(\cdot,n,\cdot) = f^{\sigma_1\circledast\sigma_1^{-1}}(\cdot,n,\cdot)
=f^{\sigma_3\circledast\sigma_3^{-1}}(\cdot,n,\cdot) \right\},$$ and
$$Z^3_{\sigma_0,\sigma_1,\sigma_2} (f)  = \left\{ p\in B(\ell_{\infty}) :  f^{\sigma_0\circledast\sigma_0^{-1}}(\cdot,\cdot,p) = f^{\sigma_1\circledast\sigma_1^{-1}}(\cdot,\cdot,p)
=f^{\sigma_2\circledast\sigma_2^{-1}}(\cdot,\cdot,p) \right\}.$$
We deduce from the arguments in Example \ref{not} that
\begin{equation*}
\begin{aligned}[c]
f^{\sigma_0\circledast\sigma_0^{-1}}(m,n,p)&=m\Box n\Box p,\\
f^{\sigma_1\circledast\sigma_1^{-1}}(m,n,p)&=m\Box (n\lozenge p),
\end{aligned}
\ \ \ \ \ \ \ \ \ \
\begin{aligned}[c]
f^{\sigma_2\circledast\sigma_2^{-1}}(m,n,p)&=m\lozenge n\lozenge p,\\
f^{\sigma_3\circledast\sigma_3^{-1}}(m,n,p)&=(m\lozenge n)\Box p.\end{aligned}
\end{equation*}

Combining these expressions with \eqref{eq Arens extension new example} we derive that $m\in Z^1_{\sigma_0,\sigma_2,\sigma_3} (f) $ if and only if $$ m n p = \mathcal{Q}(\mathcal{Q}(m)n) p = \mathcal{Q}(m) n p,$$ for all $n,p\in B(\ell_\infty)$. Taking $n=1$ we get $m p = \mathcal{Q}(m) p $ for all $p$, and hence $\mathcal{Q}(m) =m$. Back to the previous identity we get $m \mathcal{Q}(n)p = \mathcal{Q}(m)\mathcal{Q}(n) p =   \mathcal{Q}(\mathcal{Q}(m)n) p = \mathcal{Q}(m) n p =m np$ for all $n,p$, which implies that $m \mathcal{Q}(n) = m n$ for all $n$. Taking any $x_0\in \ell_\infty$, $\Gamma\in \ell_{\infty}^*$ with $\Gamma|_{c_0} =0$, and setting $n = x_0\otimes \Gamma\in B(\ell_{\infty})$, we have $\mathcal{Q}(n) =0,$ and thus $m(x_0) =0,$ for all $x_0$. We have shown that $Z^1_{\sigma_0,\sigma_2,\sigma_3} (f)=\{0\}.$\smallskip

An element $n\in Z^2_{\sigma_0,\sigma_1,\sigma_3} (f) $ if and only if $ m n p = \mathcal{Q}(m) n p = m \mathcal{Q}(n) p,$  for all $m,p\in B(\ell_\infty)$. The case $p=1=m$ gives $\mathcal{Q}(n) =n$, while $p=1$ now implies $m n = \mathcal{Q}(m) n  = m \mathcal{Q}(n),$ equivalently, $n(\ell_\infty) \subseteq c_0$. It is not hard to see that $$Z^2_{\sigma_0,\sigma_1,\sigma_3} (f) = \{n\in B(\ell_{\infty}) : \mathcal{Q}(n) =n, \hbox{ and  } n(\ell_\infty) \subseteq c_0 \}= \kappa_{K(c_0)} (K(c_0)),$$ where the last equality is explicitly proved in \cite[$(6.10)$ in page 63]{DL}.\smallskip

Finally, $p \in Z^3_{\sigma_0,\sigma_1,\sigma_2} (f) $ if and only if $ m n p = m \mathcal{Q}(n) p = \mathcal{Q}(\mathcal{Q}(m)n) p =  \mathcal{Q}(m) \mathcal{Q}(n) p,$  for all $n,m\in B(\ell_\infty)$. Taking $n=1$ we have $m p =  \mathcal{Q}(m) p$ for all $m\in B(\ell_\infty)$, equivalently, $p(\ell_\infty)\subseteq c_0$. Then the previous identities simplify in the form $$ m (n p) = \mathcal{Q}(m) (\mathcal{Q}(n) p) = \mathcal{Q}(m) (n p), \hbox{ for all } m,n\in B(\ell_{\infty}),$$ which proves that $n p (\ell_\infty)\subseteq c_0$ for all $n\in B(\ell_{\infty})$, and thus $p=0$.\smallskip

We have shown that $Z^3_{\sigma_0,\sigma_1,\sigma_2} (f)= Z^2_{\sigma_0,\sigma_1,\sigma_3} (f)\neq Z^1_{\sigma_0,\sigma_2,\sigma_3} (f)$.\smallskip

For $x=(x_n)\in c_0$, we set $\overline{x} := (\overline{x_n})$, and we define an involution on $A= K(c_0)$ given by $a^{\sharp} (x) =\overline{a(\overline{x})}$ ($a\in A= K(c_0)$). We equip $A$ with a structure of Jordan Banach triple system under the triple product $$\pi(a,b,c) = \frac12 (a b^{\sharp} c + c b^{\sharp} a), \, (a,b,c\in A).$$ It is not hard to see that \begin{equation*}
\begin{aligned}[c]
\pi^{0}(m,n,p)&=\frac12 (m\Box n^\sharp\Box p + p\lozenge n^\sharp \lozenge p) ,\\
\pi^{1}(m,n,p)&=\frac12 (m\Box (n^\sharp \lozenge p) + (p\Box n^\sharp)\lozenge m ) ,
\end{aligned}
\ \ \ \ \ \ \ \ \ \
\begin{aligned}[c]
\pi^{2}(m,n,p)&=\frac12( m\lozenge n^{\sharp}\lozenge p + p \Box n^{\sharp} m),\\
\pi^{3}(m,n,p)&=\frac12( (m\lozenge n^\sharp)\Box p + p \lozenge (n^\sharp\Box m) ).\end{aligned}
\end{equation*}
The arguments in the previous paragraphs can be applied to deduce that $$Z^1_{\sigma_0,\sigma_2,\sigma_3} (\pi)  = \left\{ m\in B(\ell_{\infty}) :  \pi^{0}(m,\cdot,\cdot) = \pi^{2}(m,\cdot,\cdot)
=\pi^{3}(m,\cdot,\cdot) \right\}=\{0\},$$ and
$$Z^2_{\sigma_0,\sigma_1,\sigma_3} (\pi)  = \left\{ n\in B(\ell_{\infty}) :  \pi^{0}(\cdot,n,\cdot) = \pi^{1}(\cdot,n,\cdot)
=\pi^{3}(\cdot,n,\cdot) \right\} $$ $$= \{n\in B(\ell_{\infty}) : \mathcal{Q}(n^{\sharp}) =n^{\sharp}, \hbox{ and  } n^{\sharp}(\ell_\infty) \subseteq c_0 \}= \kappa_{K(c_0)} (K(c_0)).$$

\medskip

We have seen in the previous example that contrary to the usual properties of the associative topological centers, the first topological center of a trilinear mapping  (or of a triple product) may be zero. In this extreme case we say that $f$ is ``strongly Aron--Berner irregular of type $1$". In particular, for the Jordan Banach triple system $(A,\pi)$ defined at the end of the previous example $Z^1_{\sigma_0,\sigma_2,\sigma_3} (\pi) $ need not contain $A$. To avoid this strange behaviour, one feels tempted to define topological centers for trilinear maps via Lemma \ref{B9} instead of Lemma \ref{c1} as principal motivation. For $\sigma\in S_3$, the alternative $\sigma$-topological centers of a trilinear operator $f:X_1\times X_2\times X_3\to Y$ are defined by 
$$\widetilde{Z}_{\sigma}^1(f) = \left\{ x_{\sigma(1)}^{**}\in X_{\sigma(1)}^{**}:\, \begin{array}{c}
                                                                                      f^{\sigma\circledast\sigma ^{-1}}(x_{\sigma(1)}^{**},\cdot,x_{\sigma(3)}^{**})\ \mbox{and}\ f^{\sigma\circledast\sigma ^{-1}}(x_{\sigma(1)}^{**},x_{\sigma(2)},\cdot) \mbox{ are } \\
                                                                                       w^*\mbox{-continuous for every}\  x_{\sigma(2)} \in X_{\sigma(2)}\ \mbox{and}\ x_{\sigma(3)}^{**}\in X_{\sigma(3)}^{**}
                                                                                    \end{array}\right\},$$
$$\widetilde{Z}_{\sigma}^2(f) = \left\{ x_{\sigma(2)}^{**}\in X_{\sigma(2)}^{**}:\, \begin{array}{c}
                                                                                      f^{\sigma\circledast\sigma ^{-1}}(\cdot, x_{\sigma(2)}^{**},x_{\sigma(3)}^{**})\ \mbox{and}\ f^{\sigma\circledast\sigma ^{-1}}(x_{\sigma(1)},x^{**}_{\sigma(2)},\cdot)\ \mbox{are} \\
                                                                                       w^*\mbox{-continuous for every}\  x_{\sigma(1)} \in X_{\sigma(1)}\ \mbox{and}\ x_{\sigma(3)}^{**}\in X_{\sigma(3)}^{**}
                                                                                    \end{array}\right\},$$
$$\widetilde{Z}_{\sigma}^3(f) = \left\{ x_{\sigma(3)}^{**}\in X_{\sigma(3)}^{**}:\, \begin{array}{c}
                                                                                      f^{\sigma\circledast\sigma ^{-1}}(\cdot, x_{\sigma(2)}^{**},x_{\sigma(3)}^{**})\ \mbox{and}\ f^{\sigma\circledast\sigma ^{-1}}(x_{\sigma(1)},\cdot,x^{**}_{\sigma(3)})\ \mbox{are} \\
                                                                                        w^*\mbox{-continuous for every}\  x_{\sigma(1)} \in X_{\sigma(1)}\ \mbox{and}\ x_{\sigma(2)}^{**}\in X_{\sigma(2)}^{**}
                                                                                    \end{array}\right\}.$$

It is easy to verify that  $\widetilde{Z}_{\sigma}^j(f)$ is a closed subspace of  $X_{\sigma(j)}^{**}$ containing  $X_{\sigma(j)}$ (cf. Lemma \ref{B9}). However these alternative topological centers are not conclusive to measure the Aron--Berner regularity of $f$. For example, for $\sigma = \sigma_0$ and the mapping $f$ in the first part of this example we have $$\widetilde{Z}_{\sigma_0}^3(f)=\Big\{ p\in A^{**}:\, f^{\circledast} (\cdot, n,p)\ \mbox{and}\ f^{\circledast}(a,\cdot,p)\ \mbox{are } w^*\mbox{-continuous for every}\  a \in A\ \mbox{and}\ n\in A^{**} \Big\}
$$
$$=\Big\{ p\in A^{**}:\, \cdot\Box n\Box p\ \mbox{and}\ a\Box \cdot\Box p\ \mbox{are } w^*\mbox{-continuous for every}\  a \in A\ \mbox{and}\ n\in A^{**} \Big\} = A^{**},$$ but $f^{\circledast}$ is not separately weak$^*$-continuous.
\end{example}
\smallskip

The, a priori different, norm preserving Arens extensions (i.e. $\Box$ and $\lozenge$) associated with the binary product of an associative Banach algebra $A$ are always associative (see \cite[Proposition A.3.53]{D}). That is, the associative property lifts from $A$ to $(A^{**},\Box)$ and $(A^{**},\lozenge)$. In Jordan triple systems the role of associativity is somehow played by the Jordan identity. Let $(E,\pi)$ be a Jordan Banach triple system. We are naturally led to ask whether $(E^{**},\pi^i)$ satisfies the Jordan identity for some (all) $i\in \{0,\ldots,5\}$. We have seen in the previous example that, in general, the Jordan identity does not lift from $(E,\pi)$ to $(E^{**},\pi^i)$.

\begin{proposition}\label{l AB regularity implies the Jordan identity in the second dual} Let $(E,\pi)$ be a Jordan Banach triple system. If $(E,\pi)$ is Aron--Berner regular. Then  $(E^{**},\pi^i)$ is a Jordan Banach triple system for each $i\in \{0,1,\ldots,5\}.$
\end{proposition}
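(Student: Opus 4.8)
The plan is to leverage the hypothesis of Aron--Berner regularity to reduce the problem to checking the Jordan identity in a situation where the triple product behaves like the original product of $E$. First I would observe that since $(E,\pi)$ is Aron--Berner regular, all six extensions coincide, so it suffices to verify that the single common extension, which I will call $\{\cdot,\cdot,\cdot\}$ on $E^{**}$, is a well-defined Jordan Banach triple product; in particular we no longer need to worry about the distinction between the $\pi^i$ or about symmetry failing across different extensions. By the remarks following \eqref{conju}, each $\pi^i$ is bilinear in the outer variables and conjugate linear in the middle variable, so boundedness and the correct (conjugate-)linearity structure are already in place; the two genuine tasks are \emph{symmetry in the outer variables} and the \emph{Jordan identity \eqref{Jordan Identity}}.

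The key tool is the separate $w^*$-continuity of the common extension guaranteed by Theorem~\ref{c6}$(a)\Rightarrow(b)$ together with Remark~\ref{r new}: since $f=\pi$ is Aron--Berner regular, the extension $\pi^0$ is separately $w^*$-continuous, and all $\pi^i$ equal it. Both the symmetry identity and the Jordan identity are, for fixed choices of the remaining variables, identities between two separately $w^*$-continuous (multilinear or multi-sesquilinear) expressions in the free variables. The plan is a standard $w^*$-density/Goldstine argument carried out one variable at a time. For symmetry, I would fix the outer variables in $E^{**}$ and the middle variable, approximate each outer $E^{**}$-element by a bounded net in $E$ converging $w^*$, use separate $w^*$-continuity of the common extension to pass limits through, and reduce to the identity $\{a,m,b\}=\{b,m,a\}$ for $a,b\in E$; iterating on the middle variable reduces this to $\{a,b,c\}=\{c,b,a\}$ for $a,b,c\in E$, which holds because $\pi$ is symmetric on $E$. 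For the Jordan identity one argues identically: both sides of \eqref{Jordan Identity}, read as functions of the five variables $a,b,c,d,e\in E^{**}$, are separately $w^*$-continuous in each slot (this uses that the composite expressions, being built from the common separately $w^*$-continuous product, remain separately $w^*$-continuous, which follows from Remark~\ref{r new} applied to the relevant derived multi-sesquilinear maps), so by approximating each of the five arguments successively by $w^*$-convergent bounded nets in $E$ one reduces to the Jordan identity on $E$ itself, which holds by hypothesis.

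The step I expect to require the most care is the justification that each composite appearing in the Jordan identity is separately $w^*$-continuous in the variable being pushed to the limit, and that when I replace one argument by a limit the \emph{remaining} arguments may legitimately already lie in $E^{**}$. The subtlety is that separate $w^*$-continuity is only asserted one variable at a time, so the induction must be ordered so that at each stage the net is taken in the single slot for which continuity is available while the other slots are held fixed; the nested-limit formula \eqref{itrated} and the coincidence of all extensions are exactly what make this ordering harmless. A clean way to organise this is to prove first that symmetry in the outer variables holds (reducing the five-variable expressions to ones that are manifestly in the correct Jordan triple form), and then to run the density argument for \eqref{Jordan Identity} term by term, verifying at each replacement that the inner brackets $\{c,d,e\}$, $\{a,b,c\}$, $\{b,a,d\}$, $\{a,b,e\}$ are themselves $w^*$-continuous in the active variable so that the outer bracket's separate $w^*$-continuity may be invoked. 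Once every argument has been driven into $E$, the identity is the hypothesised Jordan identity on $(E,\pi)$, completing the proof; continuity of the triple product on $E^{**}$ is automatic from norm preservation, so $(E^{**},\pi^i)$ is a Jordan Banach triple system for every $i$.
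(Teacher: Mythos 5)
Your proposal is correct and takes essentially the same route as the paper: the paper's proof likewise deduces from Theorem \ref{c6} and Remark \ref{r new} that the (common) extension $\pi^i$ is separately $w^*$-continuous, and then lifts both the outer symmetry and the Jordan identity from $E$ to $E^{**}$ by the iterated consecutive weak$^*$-limit argument you outline, the one-variable-at-a-time details of which are carried out explicitly in the proof of Theorem \ref{t outer symmetric is sufficient and necessary for pi0}. The ordering subtlety you flag is indeed harmless here, since full separate $w^*$-continuity in all three slots makes every composite in \eqref{Jordan Identity} $w^*$-continuous in the active variable regardless of the order in which the five limits are taken.
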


\begin{proof} It follows from Theorem \ref{c6} and Remark \ref{r new} that for each $i\in \{0,1,\ldots,5\}$ the extension $\pi^i: E^{**}\times E^{**}\times E^{**}\to E^{**}$ is separately weak$^*$-continuous. Since $\pi^i|_{E^3} = \pi$ and the latter satisfies the Jordan identity \eqref{Jordan Identity}, an iterated process taking consecutive weak$^*$-limits on the variables proves that $\pi^i$ satisfies the Jordan identity. The same argument implies that $\pi^i$ is symmetric in the outer variables, and hence $(E^{**},\pi^i)$ is a Jordan Banach triple system.
\end{proof}

Combining the above proposition with Corollary \ref{regularity}, one can deduce that the bidual, $E^{**}$, of a JB$^*$-triple  $(E,\pi)$ is  Jordan Banach triple system under each Aron--Berner triple product. The celebrated Dineen's theorem  (see \cite{dineen2}) actually proves a stronger conclusion by showing that $E^{**}$ is a JB$^*$-triple under an appropriate triple product. A finer analysis developed by Barton and Timoney concludes that $E^{**}$ is a JB$^*$-triple whose triple product (denoted by $\widetilde{\pi}$) extends that on $E$ and is separately weak$^*$-continuous (see \cite[Theorem 1.4]{BT}). The arguments in Remark \ref{r new} imply that $\widetilde{\pi}=\pi^i$ for all $i=0,\ldots,5$, consequently, we have the following fact recasting Dineen-Barton-Timoney theorem in the language of Aron--Berner triple products.

\begin{theorem}\label{e13}
Every JB$^{\ast}$-triple $E$ is Aron--Berner regular and its bidual, $E^{**},$ equipped with the triple product provided by the unique Aron--Berner extension of the triple product of $E$, is a JB$^*$-triple.
\end{theorem}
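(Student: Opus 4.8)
The plan is to deduce Theorem \ref{e13} directly from the results already assembled in the excerpt, treating it as a repackaging of the Dineen--Barton--Timoney theorem rather than a fresh construction. First I would invoke Corollary \ref{regularity}, which already asserts that every JB$^*$-triple $E$ is Aron--Berner regular; this is itself a consequence of Corollary \ref{regular space} applied to the regular Banach space $E$, since every JB$^*$-triple is regular (every bounded linear map $E\to E^*$ being weakly compact). Aron--Berner regularity means all six extensions $\pi^0,\ldots,\pi^5$ of the triple product $\pi$ coincide, so there is a single well-defined Aron--Berner extension, which I will call $\pi^\circledast$, on $E^{**}\times E^{**}\times E^{**}$.

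Next I would bring in the external input, the Dineen-Barton-Timoney theorem (\cite{dineen2}, \cite[Theorem 1.4]{BT}): the bidual $E^{**}$ of a JB$^*$-triple carries a JB$^*$-triple structure whose triple product $\widetilde{\pi}$ extends $\pi$ and is \emph{separately weak$^*$-continuous}. The decisive step is then to identify $\widetilde{\pi}$ with the Aron--Berner extension. This is exactly the content of Remark \ref{r new}: any norm-preserving, separately $w^*$-continuous extension of $\pi$ must coincide with every $f^{\sigma\circledast\sigma^{-1}}$, because such an extension is forced to agree with the iterated $w^*$-limit formula \eqref{itrated} that defines each Aron--Berner extension. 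Since $\widetilde{\pi}$ is norm-preserving (it is an isometric JB$^*$-triple product extending $\pi$) and separately $w^*$-continuous, Remark \ref{r new} yields $\widetilde{\pi}=\pi^i$ for every $i=0,\ldots,5$. Combined with the regularity from Corollary \ref{regularity}, this shows that the common value $\pi^\circledast$ of the six extensions \emph{is} the JB$^*$-triple product $\widetilde{\pi}$.

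From here the conclusion is immediate: $(E^{**},\pi^\circledast)=(E^{**},\widetilde{\pi})$ is a JB$^*$-triple by Dineen-Barton-Timoney, and since $E^{**}$ is a dual Banach space, it is in fact a JBW$^*$-triple. I would state explicitly that the uniqueness clause in the theorem follows from Aron--Berner regularity: there is genuinely only one Aron--Berner extension, so the phrase ``the unique Aron--Berner extension'' is justified, and this extension is precisely the one whose separate $w^*$-continuity Barton and Timoney established.

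The only subtlety I anticipate is purely one of \emph{verifying hypotheses} rather than any hard analysis: one must check that $\widetilde{\pi}$ genuinely meets the two conditions demanded by Remark \ref{r new}, namely being norm-preserving and separately $w^*$-continuous. Norm preservation is automatic because any triple product of a JB$^*$-triple satisfies $\|\{a,a,a\}\|=\|a\|^3$ and is contractive, so the extension restricting to $\pi$ on $E$ and defining a JB$^*$-triple structure is necessarily an isometric extension on the relevant sense; separate $w^*$-continuity is explicitly part of the Barton--Timoney statement. Thus there is no real obstacle — the proof is a clean synthesis of Corollary \ref{regularity}, Remark \ref{r new}, and the cited literature, and the main work has already been front-loaded into those earlier results.
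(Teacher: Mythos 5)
Your proposal is correct and follows essentially the same route as the paper: the paper likewise deduces Theorem \ref{e13} by combining Corollary \ref{regularity} with the Dineen--Barton--Timoney theorem and then using Remark \ref{r new} to identify the separately $w^*$-continuous product $\widetilde{\pi}$ with every $\pi^i$, $i=0,\ldots,5$. Your extra check that $\widetilde{\pi}$ is norm preserving is harmless but not really needed, since the iterated-limit argument behind Remark \ref{r new} only uses that $\widetilde{\pi}$ extends $\pi$ and is separately $w^*$-continuous.
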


Although a particular extension $\pi^i$ need not be symmetric in the outer variables, the Aron--Berner extensions enjoy certain symmetry whenever we mix the six different extensions. We shall show next that the identities exhibited in Example \ref{not}$(d)$ are a pattern satisfied by the triple product of every Jordan Banach triple system.

\begin{proposition}\label{symmetry} Let $(E,\pi)$ be a Jordan Banach triple system. Then for every $m,n,p\in E^{**},$ we have
\begin{align}\label{jab}
  \pi^0(m,n,p)=\pi^2(p,n,m), \quad \pi^1(m,n,p)=\pi^4(p,n,m),  \mbox{ and }\ \pi^3(m,n,p)=\pi^5(p,n,m).
 \end{align}
Furthermore, if $\pi^0$ is symmetric in the outer variables then $\pi^0=\pi^1=\pi^2=\pi^4$.
\end{proposition}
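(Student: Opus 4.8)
The plan is to prove the two assertions separately, basing everything on the iterated weak-$^*$-limit representation \eqref{itrated}. As already exploited in Example~\ref{not}, this representation is valid for the sesquilinear extensions $\pi^i$: the conjugate-linearity of $\pi$ in the middle slot is built into $\pi$ itself and is untouched by any reordering of the outer limits. Thus, fixing bounded nets $(m_{\alpha_1}),(n_{\alpha_2}),(p_{\alpha_3})$ in $E$ that $w^*$-converge to $m,n,p$, every $\pi^i$ is the iterated $w^*$-limit of $\pi(m_{\alpha_1},n_{\alpha_2},p_{\alpha_3})$, the outer-to-inner order of the three limits being dictated by $\sigma_i$; for instance $\pi^0$ takes them in the order $\alpha_1,\alpha_2,\alpha_3$ whereas $\pi^2$ takes them in the reverse order $\alpha_3,\alpha_2,\alpha_1$.

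For the three identities in \eqref{jab} I would rewrite each of $\pi^2(p,n,m)$, $\pi^4(p,n,m)$ and $\pi^5(p,n,m)$ as such an iterated limit and then apply, inside the net, the symmetry $\pi(a,b,c)=\pi(c,b,a)$ valid on $E$. This swap interchanges the first and third entries of $\pi$ while fixing the middle (conjugate-linear) one, and after it the three expressions become verbatim the iterated limits defining $\pi^0(m,n,p)$, $\pi^1(m,n,p)$ and $\pi^3(m,n,p)$, respectively. Since the iterated limit is independent of the approximating nets, \eqref{jab} follows. The only care needed here is the bookkeeping of the correspondence $\sigma_i\mapsto(\text{order of limits})$, which I have arranged to match slot-by-slot.

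Assume now that $\pi^0$ is symmetric in the outer variables. The first identity of \eqref{jab} together with $\pi^0(m,n,p)=\pi^0(p,n,m)$ yields $\pi^2(p,n,m)=\pi^0(p,n,m)$ for all $m,n,p$, that is, $\pi^0=\pi^2$ on $(E^{**})^3$. Restricting the first variable to $E$, the second group of Lemma~\ref{B9}(a) gives $\pi^1(x_1,\cdot,\cdot)=\pi^2(x_1,\cdot,\cdot)=\pi^4(x_1,\cdot,\cdot)$ for $x_1\in E$, so $\pi^0(x_1,\cdot,\cdot)=\pi^1(x_1,\cdot,\cdot)$ there. Because $\sigma_0(1)=\sigma_1(1)=1$, Proposition~\ref{B2} makes both $\pi^0(\cdot,n,p)$ and $\pi^1(\cdot,n,p)$ $w^*$-$w^*$-continuous in the first variable for all $n,p\in E^{**}$; approximating $m\in E^{**}$ by a bounded net in $E$ (Goldstine) and passing to the $w^*$-limit then upgrades the equality to $\pi^0=\pi^1$ everywhere.

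The remaining extension $\pi^4$ requires a small detour, and this is where the main obstacle lies: unlike $\pi^0$ and $\pi^1$, the extension $\pi^4$ is separately $w^*$-continuous in the \emph{third} variable (since $\sigma_4(1)=3$), so the Goldstine approximation in the first variable cannot be rerun for it. Instead I would reuse the second identity of \eqref{jab}, which after interchanging $m$ and $p$ reads $\pi^4(m,n,p)=\pi^1(p,n,m)$; substituting $\pi^1=\pi^0$ and then the standing symmetry of $\pi^0$ gives $\pi^4(m,n,p)=\pi^0(p,n,m)=\pi^0(m,n,p)$, i.e. $\pi^4=\pi^0$. Combining the four equalities yields $\pi^0=\pi^1=\pi^2=\pi^4$, as desired. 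The crux throughout is the asymmetry in which single variable carries the separate $w^*$-continuity of each $\pi^i$, forcing $\pi^4$ to be reached through the symmetry relations of \eqref{jab} rather than by a direct density argument.
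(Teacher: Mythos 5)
Your proposal is correct and follows essentially the same route as the paper: the identities \eqref{jab} are obtained, exactly as in the paper, by applying the outer symmetry of $\pi$ inside the iterated $w^*$-limits of \eqref{itrated}, and the second half uses the same ingredients (Lemma \ref{B9}, Proposition \ref{B2}, and a one-variable Goldstine density argument). The only cosmetic difference is the order of deductions: the paper first proves $\pi^0(m,n,p)=\pi^4(p,n,m)$ by a single $w^*$-limit via Lemma \ref{B9}$(c)$ and the third-variable continuity of $\pi^4$, then reads off $\pi^0=\pi^1$ and $\pi^1=\pi^4$ from \eqref{jab}, whereas you first get $\pi^0=\pi^1$ via Lemma \ref{B9}$(a)$ and $w^*$-continuity in the first variable, and then obtain $\pi^4=\pi^0$ purely algebraically from the second identity in \eqref{jab} --- both are valid permutations of the same argument.
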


\begin{proof} Let $(m_{\alpha_1}), (n_{\alpha_2}),$ and $(p_{\alpha_3})$ be bounded nets in $E,$  $w^*$-converging to $m, n,$ and $p$, respectively.\smallskip

Using \eqref{itrated} or Proposition \ref{B2}, we have
\begin{align*}
 \pi^0(m,n,p)&=w^*\hbox{-}\lim_{\alpha_1} \lim_{\alpha_2} \lim_{\alpha_3} \pi(m_{\alpha_1},n_{\alpha_2}, p_{\alpha_3})\\
\hspace*{3.5cm}&=w^*\hbox{-}\lim_{\alpha_1} \lim_{\alpha_2} \lim_{\alpha_3} \pi(p_{\alpha_3}, n_{\alpha_2}, m_{\alpha_1})\\
                    &=w^*\hbox{-}\lim_{\alpha_1} \lim_{\alpha_2} \lim_{\alpha_3} \pi^{\sigma_2}(m_{\alpha_1},n_{\alpha_2}, p_{\alpha_3})\\
                    &=\pi^{\sigma_2*{\bar{*}}{\bar{*}}*}(m,n,p)\\
                    &=\pi^2(p,n,m).
                    \end{align*}
The remaining equalities follow by similar arguments.\smallskip

Suppose now that $\pi^0$ is symmetric in the outer variables; then by the previous part, $\pi^0=\pi^2.$
On the other hand, since by Lemma \ref{B9}$(c)$, $\pi^0(p,n,m_\alpha)=\pi^4(p,n,m_\alpha)$ for each $\alpha$,  we have
 \begin{align*}
 \pi^0(m,n,p)
 =w^*\hbox{-}\lim_\alpha \pi^0(m_\alpha,n,p)
                    =w^*\hbox{-}\lim_\alpha \pi^0(p,n,m_\alpha)
                     =w^*\hbox{-}\lim_\alpha \pi^4(p,n,m_\alpha)
                    =\pi^4(p,n,m),
                    \end{align*}\medskip
and the second identity in \eqref{jab} implies that $\pi^0=\pi^1.$ In particular, $\pi^1$ is  symmetric in the outer variables and  so is equal to $\pi^4.$ We have therefore shown that $\pi^0=\pi^1=\pi^2=\pi^4$.
\end{proof}

Let $(E,\pi)$ be a Jordan Banach triple system. We shall next explore when $(E^{**},\pi^i)$ satisfies the Jordan identity \eqref{Jordan Identity}.

%

\begin{theorem}\label{t outer symmetric is sufficient and necessary for pi0} Let $(E,\pi)$ be a Jordan Banach triple system. Then the extension  $\pi^0: E^{**}\times E^{**}\times E^{**}\to E^{**}$ {\rm(}equivalently, $\pi^2${\rm)} is symmetric in the outer variables if and only if $(E^{**}, \pi^0)$ {\rm(}equivalently, $(E^{**}, \pi^2)${\rm)} is a Jordan Banach triple system.
\end{theorem}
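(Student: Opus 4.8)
The implication from ``$(E^{**},\pi^0)$ is a Jordan Banach triple system'' to ``$\pi^0$ is symmetric in the outer variables'' is immediate, since outer symmetry is part of the very definition of a Jordan triple system. For the reverse implication the plan is to assume that $\pi^0$ is symmetric in the outer variables and prove that $\pi^0$ satisfies the Jordan identity \eqref{Jordan Identity} on all of $E^{**}$. Once this is done the conclusion follows, because each $\pi^i$ is, by construction, bilinear in the outer variables, conjugate linear in the middle one, and (being a norm preserving trilinear extension) jointly norm continuous, so outer symmetry together with the Jordan identity is exactly what is needed for $(E^{**},\pi^0)$ to be a Jordan Banach triple system. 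The parenthetical assertion for $\pi^2$ then comes for free: under the symmetry hypothesis Proposition \ref{symmetry} gives $\pi^0=\pi^2$, and in general $\pi^2(m,n,p)=\pi^0(p,n,m)$ makes the two symmetry conditions, and the two Jordan-system conclusions, equivalent.

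The first step is to pin down exactly what continuity is available. From Proposition \ref{symmetry}, the outer symmetry of $\pi^0$ yields $\pi^0=\pi^1=\pi^2=\pi^4$ on $E^{**}\times E^{**}\times E^{**}$. Feeding the $w^*$-continuity of Proposition \ref{B2} for $\sigma_1$ (continuity in the first slot) and for $\sigma_2$ or $\sigma_4$ (continuity in the third slot) into these identities shows that $\pi^0$ is separately $w^*$-continuous in each of its two \emph{outer} variables, the remaining two ranging freely over $E^{**}$. The middle variable is the delicate point, and here I would use two partial statements. On one hand, Proposition \ref{B2} applied to $\sigma_0$ gives middle continuity of $\pi^0$ whenever the first variable lies in $E$. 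On the other hand, combining the hypothesis $\pi^0=\pi^2$ with Lemma \ref{B9}$(c)$ (which gives $\pi^2=\pi^3=\pi^5$ on $E^{**}\times E^{**}\times E$) shows that \emph{all six} extensions agree on $E^{**}\times E^{**}\times E$; in particular $\pi^0=\pi^3$ there, and since $\pi^3$ is $w^*$-continuous in the middle slot, $\pi^0$ is $w^*$-continuous in the middle variable whenever the third variable lies in $E$. In summary: $\pi^0$ is $w^*$-continuous in the middle variable as soon as at least one of its two outer variables belongs to $E$.

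With these facts in hand, I would lift the Jordan identity from $E$ to $E^{**}$ by successive Goldstine (weak$^*$-density) arguments, promoting one variable at a time from a bounded net in $E$ to its $w^*$-limit and passing to the limit on both sides. Writing the difference of the two sides of \eqref{Jordan Identity} as $F(a,b,c,d,e)$, one has $F=0$ when all five arguments lie in $E$, and the extension to $E^{**}$ should be carried out \emph{in the order $b,a,d,c,e$}. This order is essentially forced: across the four summands of \eqref{Jordan Identity} the variables $c$ and $e$ occupy only outer slots, whereas each of $a,b,d$ occupies a middle slot in at least one summand. At every stage I would verify $w^*$-continuity of $F$ in the variable being promoted, treating outer occurrences by the separate continuity in the outer variables and middle occurrences by the combined middle-continuity statement above — the point being that at each stage every middle occurrence still has one of its two outer companions in $E$. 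Concretely: when promoting $b$ the first companion $a$ (in the left-hand side and in the first and third summands) is still in $E$; when promoting $a$ the single middle occurrence, inside $\{b,a,d\}$ in the second summand, has its third companion $d$ in $E$; when promoting $d$ each middle occurrence has a companion $c$ or $e$ in $E$ (the occurrence inside the outer product of the first summand relying precisely on the coincidence $\pi^0=\pi^3$ on $E^{**}\times E^{**}\times E$); and the final promotions of $c$ and $e$ use only the outer continuity.

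The main obstacle, as already flagged, is the absence of separate $w^*$-continuity of $\pi^0$ in the middle variable: $\pi^0$ is not assumed Aron--Berner regular. The device that defeats it is the coincidence of all six extensions on $E^{**}\times E^{**}\times E$, which upgrades middle continuity to the regime ``third variable in $E$'' and thereby makes the order $b,a,d,c,e$ feasible. Any attempt to promote $c$ or $e$ before $a,b,d$, or to reach a configuration in which some middle variable has \emph{both} outer companions already in $E^{**}$, would demand full separate $w^*$-continuity, i.e.\ Aron--Berner regularity, which is exactly what is not available here. Once $F$ has been extended to all of $E^{**}\times E^{**}\times E^{**}$ it vanishes identically, so $\pi^0$ satisfies the Jordan identity; together with the assumed outer symmetry and the automatic multilinearity and norm continuity, this completes the proof that $(E^{**},\pi^0)$ is a Jordan Banach triple system.
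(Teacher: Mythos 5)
Your proposal is correct and follows essentially the same route as the paper's proof: the trivial ``if'' direction, then Proposition \ref{symmetry} to get $\pi^0=\pi^1=\pi^2=\pi^4$, the assembly of partial weak$^*$-continuity properties (always in the two outer variables, and in the middle variable whenever one outer companion lies in $E$), and finally the lifting of the Jordan identity by iterated weak$^*$-limits taken in an order that keeps every middle-slot promotion companioned by an element of $E$. The only cosmetic differences are that the paper obtains the middle-variable continuity directly from outer symmetry, via $\pi^0(p,x,a)=\pi^0(a,x,p)$, rather than through Lemma \ref{B9}$(c)$ and $\pi^3$, and that it promotes the variables in the order $a,b,d,e,c$ instead of your $b,a,d,c,e$ --- both orders satisfy exactly the constraint you identify.
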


\begin{proof} The if implication is clear; let us assume that the extension $\pi^0$ is symmetric in the outer variables. It follows from Proposition \ref{symmetry} that $\pi^0=\pi^2$. We shall prove that $\pi^0$ satisfies the Jordan identity.\smallskip

The basic properties of the Aron--Berner extensions assure that the mappings $\pi^0 (\cdot,n,p)$, $\pi^0 (a,\cdot,p)$ and $\pi^0 (a,b,\cdot)$ are weak$^*$-continuous for all $n,p\in E^{**}$, $a,b\in E$ (cf. Proposition \ref{B2}). Since $\pi^0$ is symmetric in the outer variables, we also deduce that the mappings $\pi^0 (p,n,\cdot)$ and $\pi^0 (p,\cdot,a)$ are weak$^*$-continuous for all $n,p\in E^{**}$, $a\in E$.\smallskip

Let us pick five points $m,n,x,y,z\in E^{**}$ and five bounded nets $(m_{\alpha_1})$, $(n_{\alpha_2}),$ $(x_{\alpha_3}),$ $(y_{\alpha_4})$, and $(z_{\alpha_5})$ in $E$ converging in the $w^*$ topology of $E^{**}$ to $m,n,x,y,$ and $z$, respectively. By the Jordan identity in $E$ we have $$\pi(m_{\alpha_1},n_{\alpha_2},\pi(x_{\alpha_3},y_{\alpha_4},z_{\alpha_5})) = \pi(\pi(m_{\alpha_1},n_{\alpha_2},x_{\alpha_3}),y_{\alpha_4},z_{\alpha_5}) - \pi(x_{\alpha_3},\pi(n_{\alpha_2},m_{\alpha_1},y_{\alpha_4}),z_{\alpha_5}) $$ $$+ \pi(x_{\alpha_3},y_{\alpha_4},\pi(m_{\alpha_1},n_{\alpha_2},z_{\alpha_5})),$$ for all ${\alpha_1},{\alpha_2},{\alpha_3},{\alpha_4}$ and ${\alpha_5}$. Since the maps $\pi^0(\cdot,n_{\alpha_2},\pi^0(x_{\alpha_3},y_{\alpha_4},z_{\alpha_5}))$, $ \pi^0(\pi^0(\cdot,n_{\alpha_2},x_{\alpha_3}),y_{\alpha_4},z_{\alpha_5}),$ $\pi^0(x_{\alpha_3},\pi^0(n_{\alpha_2},\cdot,y_{\alpha_4}),z_{\alpha_5}),$ and $\pi^0(x_{\alpha_3},y_{\alpha_4},\pi(\cdot,n_{\alpha_2},z_{\alpha_5}))$ are weak$^*$-continuous, by taking $w^*$-limits in $\alpha_1$ we get $$\pi^0(m,n_{\alpha_2},\pi(x_{\alpha_3},y_{\alpha_4},z_{\alpha_5})) = \pi^0(\pi^0(m,n_{\alpha_2},x_{\alpha_3}),y_{\alpha_4},z_{\alpha_5}) - \pi^0(x_{\alpha_3},\pi^0(n_{\alpha_2},m,y_{\alpha_4}),z_{\alpha_5}) $$ $$+ \pi^0(x_{\alpha_3},y_{\alpha_4},\pi^0(m,n_{\alpha_2},z_{\alpha_5})),$$ for all ${\alpha_2},{\alpha_3},{\alpha_4},$ and ${\alpha_5}.$ Having in mind that $\pi^0(m,\cdot,\pi(x_{\alpha_3},y_{\alpha_4},z_{\alpha_5})),$ $ \pi^0(\pi^0(m,\cdot,x_{\alpha_3}),y_{\alpha_4},z_{\alpha_5})$, $\pi^0(x_{\alpha_3},\pi^0(\cdot,m,y_{\alpha_4}),z_{\alpha_5}),$ and $ \pi^0(x_{\alpha_3},y_{\alpha_4},\pi^0(m,\cdot,z_{\alpha_5}))$ are weak$^*$-continuous, by taking $w^*$-limits in ${\alpha_2}$ we deduce that $$\pi^0(m,n,\pi(x_{\alpha_3},y_{\alpha_4},z_{\alpha_5})) = \pi^0(\pi^0(m,n,x_{\alpha_3}),y_{\alpha_4},z_{\alpha_5}) - \pi^0(x_{\alpha_3},\pi^0(n,m,y_{\alpha_4}),z_{\alpha_5}) $$ $$+ \pi^0(x_{\alpha_3},y_{\alpha_4},\pi^0(m,n,z_{\alpha_5})),$$ for all ${\alpha_3},{\alpha_4},$ and ${\alpha_5}.$ Now, applying that $\pi^0(m,n,\pi^0(x_{\alpha_3},\cdot,z_{\alpha_5})),$ $\pi^0(\pi^0(m,n,x_{\alpha_3}),\cdot,z_{\alpha_5}),$\linebreak $\pi^0(x_{\alpha_3},\pi^0(n,m,\cdot),z_{\alpha_5}),$ and $\pi^0(x_{\alpha_3},\cdot,\pi^0(m,n,z_{\alpha_5}))$ are weak$^*$-continuous and taking $w^*$-limits in ${\alpha_4}$ we obtain $$\pi^0(m,n,\pi^0(x_{\alpha_3},y,z_{\alpha_5})) = \pi^0(\pi^0(m,n,x_{\alpha_3}),y,z_{\alpha_5}) - \pi^0(x_{\alpha_3},\pi^0(n,m,y),z_{\alpha_5})$$ $$+ \pi^0(x_{\alpha_3},y,\pi^0(m,n,z_{\alpha_5})),$$ for all ${\alpha_3},$ and ${\alpha_5}.$\smallskip

In the penultimate step we observe that the maps $\pi^0(m,n,\pi^0(x_{\alpha_3},y,\cdot))$, $\pi^0(\pi^0(m,n,x_{\alpha_3}),y,\cdot)$, $\pi^0(x_{\alpha_3},\pi^0(n,m,y),\cdot),$ and $\pi^0(x_{\alpha_3},y,\pi^0(m,n,\cdot))$ are weak$^*$-continuous, so by taking $w^*$-limits in ${\alpha_5}$ in the latest identity we derive at $$\pi^0(m,n,\pi^0(x_{\alpha_3},y,z)) = \pi^0(\pi^0(m,n,x_{\alpha_3}),y,z) - \pi^0(x_{\alpha_3},\pi^0(n,m,y),z)+ \pi^0(x_{\alpha_3},y,\pi^0(m,n,z)),$$ for all ${\alpha_3}.$ Finally, taking weak$^*$-limits in $\alpha_3$ we have $$\pi^0(m,n,\pi^0(x,y,z)) = \pi^0(\pi^0(m,n,x),y,z) - \pi^0(x,\pi^0(n,m,y),z)+ \pi^0(x,y,\pi^0(m,n,z)),$$ witnessing that $\pi^0$ satisfies the Jordan identity.
\end{proof}

Our next example illustrates the optimality of Theorem \ref{t outer symmetric is sufficient and necessary for pi0}.

\begin{example}\label{example pi1 symmetric but not Jordan identity} As in Example \ref{not}, let $E$ be a commutative complex Banach $^*$-algebra which is not Arens regular, and let $\Box$ and $\lozenge$ denote the different Arens extensions of the product of $E$. Let us consider the triple product on $E$ defined by $\pi (x,y,z) = \phi (x y^*) z + \phi (z y^*) x$ ($x,y,z\in E$), where $\phi$ is a functional in $E^*$ satisfying $\phi (x^*) = \overline{\phi (x)}$, for all $x\in E$. It is not hard to check that, by applying \eqref{itrated} or Proposition \ref{B2}, we have
\begin{equation*}
\begin{aligned}[c]
\pi^0(m,n,p)&=\phi(m\Box n^*) p +\phi(p\lozenge n^*) m,\\
\pi^1(m,n,p)&=\phi(m\Box n^*) p +\phi(p\Box n^*) m,\\
\pi^3(m,n,p)&=\phi(m\lozenge n^*) p +\phi(p\lozenge n^*) m,
\end{aligned}
\ \ \ \ \ \ \ \ \ \
\begin{aligned}[c]
\pi^2(m,n,p)&=\phi(m\lozenge n^*) p +\phi(p\Box n^*) m,\\
\pi^4(m,n,p)&=\phi(m\Box n^*) p +\phi(p\Box n^*) m,\\
\pi^5(m,n,p)&=\phi(m\lozenge n^*) p +\phi(p\lozenge n^*) m,
\end{aligned}
\end{equation*} for all $m,n,p\in E^{**}$. Clearly, the extensions $\pi^0$ and $\pi^2$ are not, in general, symmetric in the outer variables, while $\pi^1=\pi^4$ and $\pi^3=\pi^5$ are symmetric in the outer variables. It is a bit laborious to check that $\pi^1$ and $\pi^3$ do not satisfy the Jordan identity. For example, the identity $$\pi^1(m,n,\pi^1(1,1,1)) = \pi^1(\pi^1(m,n,1),1,1)- \pi^1(1,\pi^1(n,m,1),1) + \pi^1(1,1,\pi^1(m,n,1))$$ holds if and only if $\phi (m\Box n^*) = \phi (n^*\Box m)$,  and the latter identity does not hold, in general, as $E$ is not Arens regular.
\end{example}

\begin{remark}
\begin{enumerate}[$(i)$]
\item  By mimicking the proof of the fact that the bidual of a C$^*$-algebra is itself a C$^*$-algebra  (see \cite[Section III.2]{T} and \cite[Theorem 3.2.36]{D}), one may feel tempted to provide the same direct proof for Theorem \ref{e13} but not based on the results proved by Dineen, Barton and Timoney which were built on holomorphic theory and ultraproducts techniques. This opens the door to an argument not based on holomorphic theory.

\item One can also extend the triple product of a JB$^*$-triple $(E,\pi)$ to the $(2n)$-dual, $E^{(2n)}$, where $\pi^{[n]}$ is inductively defined by the formulae $\pi^{[1]}:=\pi^0,\  \pi^{[n+1]}:={\big(\pi^{[n]}\big)}^0.$ Then Theorem \ref{e13} implies that
 $(E^{(2n)},\pi^{[n]})$ is a again a JB$^*$-triple for each $n\in\mathbb{N}$ and the triple product $\pi^{[n]}$ is separately $w^*$-continuous.
Niazi, Miri, and the second named author \cite{NME} used the extension $\pi^{[n]}$ and some related module operations for investigating the $n$-weak amenability of the bidual of a JB$^*$-triple.
\end{enumerate}
\end{remark}

\subsection{Aron--Berner triple products  versus ultrapower triple products}\label{second2}

We have already commented that, beside the Aron--Berner extensions (see \eqref{conju}), there is an alternative method to extend the triple product of  a Jordan Banach triple system $(E,\pi)$ its bidual, which relies on the so-called principle of local reflexivity  \cite[Proposition 6.6]{He}, based on an analysis of finite-dimensional subspaces of the second dual of a Banach space.  This result implies the existence of an ultrafilter $\mathcal U$ such that the bidual $E^{**}$ can be isometrically embedded into  the  ultrapower $E_{\mathcal U}$ of $E$, via a map  $J:E^{**}\longrightarrow E_{\mathcal U}$ such that $J(E^{**})$ is the range of a contractive projection  on $E_{\mathcal U}$ and the restriction of $J$ to $E$ is the canonical embedding of $E$ into $E_{\mathcal U}$ (see \cite[Proposition 5]{dineen2} and \cite[Proposition 6.7]{He}).\smallskip

More concretely, let $\mathcal{U}$ be an ultrafilter on a nonempty set $I$, and let $\{X_i \}_{i\in I}$ be a family of Banach spaces. The symbol $\ell_{\infty} (I,X_i) = \ell_{\infty} (X_i)$ will stand for the Banach space obtained as the $\ell_{\infty}$-sum of the family $\{X_i \}_{i\in I}$, while $${c}_0 \left(X_i\right) := \left\{ (x_i) \in \ell_{\infty} (X_i) : \lim_{\mathcal{U}} \|x_i\| =0 \right\}.$$
The \emph{ultraproduct} of the family $\{X_i \}_{i\in I}$ relative to the ultrafilter $\mathcal{U}$, denoted by $(X_i)_\mathcal{U}$,
is defined as the quotient Banach space $\ell_{\infty} (X_i)/{c}_0 \left(X_i\right)$ equipped with the quotient norm. Given an element $[x_i]_{\mathcal{U}}$ in $(X_i)_\mathcal{U}$ it is known that $$\| [x_i]_{\mathcal{U}} \| = \lim_{\mathcal{U}} \|x_i\|,$$ independently of the representative of $[x_i]_{\mathcal{U}}$. If all the spaces in the family $\{X_i \}_{i\in I}$ coincide with a Banach space $E$, then we speak of an \emph{ultrapower},
denoted by $E_{\mathcal{U}}$. Every Banach space $E$ can be isometrically embedded into its ultrapower via the mapping $J: E\to E_{\mathcal{U}},$ $J(x) = [x_i]_{\mathcal{U}}$, with $x_i=x$ for all $i\in I$.\smallskip

If we assume that $E$ is a Jordan Banach triple system with respect to the triple product $\pi$, then $\ell_{\infty} (E_i)$ and the ultrapower $E_{\mathcal{U}}$ become Jordan Banach triple systems with respect to pointwise triple product and
$$\pi_{\mathcal{U}} ([x_i]_{\mathcal{U}},[y_i]_{\mathcal{U}},[z_i]_{\mathcal{U}}) := [\pi(x_i,y_i,z_i)]_{\mathcal{U}}, \ \ ([x_i]_{\mathcal{U}},[y_i]_{\mathcal{U}},[z_i]_{\mathcal{U}}\in E_{\mathcal{U}}),$$ respectively. If we further assume that $\{E_i:i\in I\}$ is a family of JB$^*$-triples, the Banach space $\ell_{\infty} (E_i)$ is a JB$^*$-triple with pointwise triple product (see \cite[p. 523]{K} or \cite[Ex.\ 3.1.4]{Ch}), and $\mathcal{I}={c}_0 \left(E_i\right)$ is a closed triple ideal of $\ell_{\infty} (E_i)$ (i.e. $\{ E,E,\mathcal{I}\} + \{ E,\mathcal{I},E\}\subseteq \mathcal{I}$). Therefore, the quotient $(E_i)_{\mathcal{U}}= \ell_{\infty} (E_i)/{c}_0 \left(E_i\right)$ is a JB$^*$-triple (see \cite{K} or \cite[Corollary\ 3.1.18]{Ch}).\smallskip

One of the key results in the theory of ultrapowers is the following result due to Heinrich (see \cite[Proposition 6.7]{He}): For each Banach space $E$ there exist an ultrafilter $\mathcal{U}$ and an isometric embedding $J$ of $E^{**}$ into $E_{\mathcal{U}}$ satisfying:\begin{enumerate}[$(a)$]\item  The restriction of $J$ to $E$ is the canonical embedding of $E$ into $E_{\mathcal{U}}$;
\item If $J(a) = [a_i]_{\mathcal{U}}$ with $a\in E^{**}$, then $w^*$-$\lim_{\mathcal{U}} (a_i) = a$ and the mapping $P([x_i]_{\mathcal{U}}) = w^*$-$\lim_{\mathcal{U}} x_i$ ($[x_i]_{\mathcal{U}}\in E_{\mathcal{U}}$) is a contractive projection from $E_{\mathcal{U}}$ onto $J(E^{**})$.
\end{enumerate} Henceforth, we fix the ultrafilter $\mathcal{U}$ satisfying the above properties.\smallskip

Heinrich's result offers a procedure to project properties from $E_{\mathcal{U}}$ onto $E^{**}$. For example, if $(E,\pi)$ is a Jordan Banach triple system we can define a triple product on $E^{**}$ by the assignment \begin{align}\label{u}
\pi^{\mathcal U}(m,n,p)=w^*\hbox{-}\lim_{\mathcal U}\pi(a_\alpha, b_\alpha, c_\alpha)\quad (m,n,p\in E^{**}),
\end{align}  for all $J(m)=[a_\alpha]_{\mathcal U}, J(n)=[b_\alpha]_{\mathcal U}$ and $J(p)=[c_\alpha]_{\mathcal U}$ in $E^{**}\subseteq E_{\mathcal{U}}$,  where the convergence is guaranteed by the weak$^*$-compactness of the closed unit ball of $E^{**}$. We obtain this way an extension $\pi^{\mathcal U}$ of $\pi$ to $E^{**}\times E^{**}\times E^{**}.$ A similar procedure was explored by Iochum and Loupias in \cite{IL0} (see also \cite{IL}), where they assigned an ultrapower product to the bidual of a Banach algebra and compared it with the so-called Arens products \cite{Ar}.\smallskip

The extension $\pi^{\mathcal U}$ naturally reflects the bilinearity in the outer variables and conjugate linearity in the middle variable of $\pi$. It also reflects the symmetry in the outer variables of $\pi$ more naturally than the Aron--Berner extensions (cf. Example \ref{not}). But in general, $\pi^{\mathcal U}$ does not satisfy the Jordan  identity  \eqref{Jordan Identity}. If we further assume that $\pi^{\mathcal U}$ is separately $w^*$-continuous, then, by Remark \ref{r new}, $\pi^{\mathcal U}=\pi^i$ for each $i=0,\ldots,5.$ In particular, $(E,\pi)$ is Aron--Berner regular and  $(E^{**},\pi^{\mathcal U})$ is a Jordan Banach triple system.\smallskip

When $E$ is a JB$^*$-triple, the \emph{contractive projection principle} (see \cite{Ka84} and \cite{Sta82}) is the key tool applied by Dineen and Barton-Timoney to equip $E^{**}$ with an optimal triple product. In the wider setting of Jordan Banach triple systems the contractive projection principle is simply hopeless.\smallskip

In the next result, we compare the ultrapower triple extension  $\pi^{\mathcal U}$ with the Aron--Berner extensions $\pi^i,$ ($i=0,\ldots,5)$. 

\begin{proposition}\label{ultra}
Let $(E,\pi)$ be a Jordan Banach triple system. Then the following statements hold:
 \begin{enumerate}[$(i)$]
\item For $m,n,p\in E^{**},$ if at least two of $m,n,p$ lie in $E$, then $\pi^{\mathcal U}(m,n,p)=\pi^i(m,n,p)$ for each $i=0,\ldots,5;$
\item Given $a\in E$, $n\in E^{**}$ and a bounded net $(c_\gamma)$ in $E$ $w^*$-converging to $p,$ then $$w^*\hbox{-}\lim_{\gamma} \pi^{\mathcal{U}} (a,n,c_{\gamma}) = \pi^1 (a,n,p);$$
\item Given $n\in E^{**}$ and bounded nets $(a_\alpha)$ and $(c_\gamma)$ in $E$ $w^*$-converging to $m$ and $p,$ respectively, then $$w^*\hbox{-}\lim_{\alpha} w^*\hbox{-}\lim_{\gamma} \pi^{\mathcal{U}} (a_{\alpha},n,c_{\gamma}) = w^*\hbox{-}\lim_{\alpha} \pi^1 (a_{\alpha},n,p) = \pi^1 (m,n,p);$$
\item If $\pi^{\mathcal U}$ is $w^*$-continuous in the first {\rm(}or equivalently, third{\rm)} variable, then $\pi^{\mathcal U}=\pi^1=\pi^4$.
\end{enumerate}
\end{proposition}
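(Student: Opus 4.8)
The plan is to exploit three ingredients throughout: Heinrich's property $(b)$, which ensures that whenever $J(m)=[a_i]_{\mathcal U}$ one has $w^*\text{-}\lim_{\mathcal U} a_i = m$; the exact $w^*$-continuity pattern of each $\pi^i$ recorded in Proposition \ref{B2}; and the coincidence of all six extensions on the three ``two-entries-in-$E$'' slices listed in \eqref{B8}. For $(i)$, assume two of $m,n,p$ lie in $E$ and let $[a_i]_{\mathcal U}$ be the Heinrich representative of the remaining bidual entry. Since the two entries in $E$ contribute constant representatives, the defining formula \eqref{u} collapses $\pi^{\mathcal U}(m,n,p)$ to a single $w^*$-limit of $\pi$ with two fixed entries in $E$. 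By Proposition \ref{B2} the relevant extension is $w^*$-continuous in precisely that free slot once the other two are frozen in $E$, so the limit passes inside and reproduces that extension; \eqref{B8} then upgrades the equality to all $i$ at once.

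For $(ii)$, part $(i)$ already gives $\pi^{\mathcal U}(a,n,c_\gamma)=\pi^1(a,n,c_\gamma)$ because $a$ and every $c_\gamma$ lie in $E$. Proposition \ref{B2} applied to $\sigma_1$ shows that $\pi^1(a,n,\cdot)$ is $w^*$-continuous for $a\in E$, $n\in E^{**}$, so passing to the $w^*$-limit in $\gamma$ yields $\pi^1(a,n,p)$. Part $(iii)$ then follows by applying $(ii)$ with $a=a_\alpha$ to obtain $w^*\text{-}\lim_\gamma \pi^{\mathcal U}(a_\alpha,n,c_\gamma)=\pi^1(a_\alpha,n,p)$, and invoking the first-variable $w^*$-continuity of $\pi^1$ (Proposition \ref{B2}, with both outer entries arbitrary in the bidual) to take the outer limit in $\alpha$, producing $\pi^1(m,n,p)$.

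The core is $(iv)$, and the first point I would establish is that $\pi^{\mathcal U}$ is symmetric in the outer variables: since $\pi$ is, the pointwise formula \eqref{u} gives $\pi^{\mathcal U}(m,n,p)=w^*\text{-}\lim_{\mathcal U}\pi(a_i,b_i,c_i)=w^*\text{-}\lim_{\mathcal U}\pi(c_i,b_i,a_i)=\pi^{\mathcal U}(p,n,m)$. This symmetry is exactly what makes $w^*$-continuity in the first variable equivalent to $w^*$-continuity in the third, justifying the parenthetical remark in the statement. Assuming first-variable $w^*$-continuity of $\pi^{\mathcal U}$, I would fix $a\in E$ and combine $(ii)$ with the symmetry-induced third-variable continuity of $\pi^{\mathcal U}$ to obtain $\pi^{\mathcal U}(a,n,p)=\pi^1(a,n,p)$ for every $a\in E$. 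Then, choosing a bounded net $(a_\alpha)$ in $E$ which is $w^*$-convergent to $m$, the first-variable $w^*$-continuity of $\pi^{\mathcal U}$ (hypothesis) and of $\pi^1$ (Proposition \ref{B2}) allow passing to the limit in $\alpha$ on both sides of $\pi^{\mathcal U}(a_\alpha,n,p)=\pi^1(a_\alpha,n,p)$, giving $\pi^{\mathcal U}=\pi^1$. Finally, since $\pi^{\mathcal U}$, hence $\pi^1$, is symmetric in the outer variables, comparing with the identity $\pi^1(m,n,p)=\pi^4(p,n,m)$ from Proposition \ref{symmetry} forces $\pi^1=\pi^4$.

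The main obstacle is the continuity bookkeeping in $(iv)$: each $\pi^i$ is $w^*$-continuous in a given slot only for prescribed locations of the other two slots (original space versus bidual), so at every step one must check that the frozen entries sit where Proposition \ref{B2} requires. The symmetry of $\pi^{\mathcal U}$ is the crucial device here, since it upgrades the hypothesised first-variable continuity to third-variable continuity and thereby converts the net-limit of $(ii)$ into a genuine evaluation $\pi^{\mathcal U}(a,n,p)$; without it the chain from $(ii)$ to $\pi^{\mathcal U}=\pi^1$ breaks down.
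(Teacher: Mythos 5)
Your proposal is correct and follows essentially the same route as the paper: part $(i)$ via Heinrich's property and the $w^*$-continuity pattern of Proposition \ref{B2} together with the coincidence on the slices \eqref{B8}, parts $(ii)$--$(iii)$ verbatim, and part $(iv)$ by the same limit argument (merely reorganized into two single-limit steps instead of one iterated limit) followed by Proposition \ref{symmetry} to get $\pi^1=\pi^4$. Your only addition is the explicit verification that $\pi^{\mathcal U}$ inherits outer symmetry from $\pi$, which the paper asserts in the discussion preceding the proposition and uses implicitly for the ``or equivalently, third'' clause.
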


\begin{proof}
For $(i)$, we only prove $\pi^{\mathcal U}(a,b,p)=\pi^0(a,b,p),$ for $p\in E^{**}$ and $a,b\in E.$  The other cases follow by a similar argument. Let $J(p)=[c_\alpha]_{\mathcal U}\in E_{\mathcal U}$; then by the properties of the ultrafilter $\mathcal{U}$ we have $w^*\hbox{-}\lim_{\mathcal U}c_\alpha=p.$  Having in mind \eqref{u} and the fact that the map $\cdot\mapsto\pi^0(a,b,\cdot)$ is $w^*$-continuous, we deduce from Proposition \ref{B2} that
\begin{align*}
    \langle\pi^{\mathcal U}(a,b,p),\lambda\rangle =\lim_{\mathcal U}\langle\lambda,\pi(a,b,c_\alpha)\rangle=\lim_{\mathcal U}\langle\pi^0(a,b,c_\alpha),\lambda\rangle=\langle\pi^0(a,b,p),\lambda\rangle,
   \end{align*}
   for each $\lambda\in E^*,$ as claimed in $(i)$.\smallskip

$(ii)$ Let us fix $a\in E$, $n\in E^{**}$ and a bounded net $(p_\gamma)$ in $E$ $w^*$-converging to $p.$ It follows from $(i)$ and Proposition \ref{B2} that $$w^*\hbox{-}\lim_{\gamma} \pi^{\mathcal{U}} (a,n,p_{\gamma}) =w^*\hbox{-}\lim_{\gamma} \pi^{1} (a,n,p_{\gamma})= \pi^1 (a,n,p).$$

Statement $(iii)$ is a consequence of $(ii)$ and Proposition \ref{B2}.\smallskip

To prove $(iv)$, suppose that $\pi^{\mathcal U}$ is $w^*$-continuous in the first (and by symmetry, in the third) variable.  Let $m,n,p\in E^{**},$ and let $(m_\alpha), (p_\gamma)$ be bounded nets in $E$, $w^*$-converging to $m,p,$ respectively. Then by applying $(iii)$ and the hypothesis on $\pi^{\mathcal{U}}$ we get
   \[\pi^{\mathcal U}(m,n,p)=w^*\hbox{-}\lim_\alpha\lim_\gamma\pi^{\mathcal U}(m_\alpha,n,p_\gamma)=w^*\hbox{-}\lim_\alpha\lim_\gamma\pi^1(m_\alpha,n,p_\gamma)=\pi^1(m,n,p).\]
We thus deduce that $\pi^{\mathcal U}=\pi^1$. It follows that $\pi^1$ is symmetric in the outer variables and thus, by 
Proposition \ref{symmetry}, we have $\pi^{\mathcal U}=\pi^1=\pi^4.$
 \end{proof}


In the following example we compare triple products on the bidual of a Jordan Banach triple system obtained by Aron--Berner extensions with those obtained by ultrapower techniques.

\begin{example} \label{ex} The construction developed in this section can be also done for an ultrafilter $\mathcal{V}$ on an index set $I$ satisfying the property of the local reflexivity principle, that is there exists an isometric embedding $J$ of $E^{**}$ into $E_{\mathcal{V}}$ satisfying the following properties:\begin{enumerate}[$(a)$]\item  the restriction of $J$ to $E$ is the canonical embedding of $E$ into $E_{\mathcal{V}}$;
\item If $J(a) = [a_i]_{\mathcal{V}}$ with $a\in E^{**}$, then $w^*$-$\lim_{\mathcal{V}} (a_i) = a$.
\end{enumerate}

Following Example \ref{not}, let $E$ be a non Arens regular, unital   commutative  Banach $^*$-algebra (namely, the convolution group algebra $(\ell^1(\mathbb Z),\ast)$). Then the Jordan Banach triple system $(E,\pi),$ where $\pi(a,b,c)=ab^*c,$ is not  Aron--Berner regular.
Let $\mathcal V$ be any ultrafilter satisfying the property of the local reflexivity principle. By applying \eqref{u}, we deduce that the extension $\pi^{\mathcal V}$  is symmetric in the outer variables. But this is not the case for none of $\pi^i$ as it is explored in Example \ref{not}. Thus we  have  $\pi^{\mathcal V}\neq \pi^i$ for every $i=0,\ldots,5.$ Hence, by part $(iv)$ of Proposition \ref{ultra}, $\pi^{\mathcal V}$ is not $w^*$-continuous in the first variable. Therefore, there is no ultrafilter $\mathcal V$ satisfying the property of the local reflexivity principle such that the ultrapower extension $\pi^{\mathcal V}$ of $\pi$ is  $w^*$-continuous in the first variable; see also  Remark \ref{r new}.   We do not know if $(E^{**},\pi^{\mathcal V})$ is a Jordan Banach triple system for some such an ultrafilter $\mathcal V$.\smallskip

Let $E=\ell^1(\mathbb N)$. Then $E$ equipped with pointwise product is a commutative Banach $^*$-algebra which is  Arens regular and an ideal in its bidual (for details, see \cite[Example III.1]{GI} and/or \cite[Example 2.6.22 (iii)]{D}). Then $E$ can be viewed as an abelian Jordan Banach triple system with triple product $\pi(a,b,c)=\{a,b,c\}=ab^*c,$ (see \eqref {abelian}). Then the  Arens regularity of the Banach algebra $E$ implies that  $(E,\pi)$ is Aron--Berner regular; indeed,  for each $i=0,\ldots,5,$  we have $\pi^i(m,n,p)=m\Box n^*\Box p$ for every $m,n,p\in E^{**}.$ In particular, it is symmetric in all of its variables on $\big(E^{**}\big)_{sa}$. Suppose that $\mathcal V$ is an ultrafilter satisfying the property of the local reflexivity principle and such that $\pi^{\mathcal V}$ is not separately $w^*$-continuous (note that the existence of such an ultrafilter for $\ell^1(\mathbb N)$ is guaranteed by \cite[Example III.1.3]{GI}). We do not know if in this case $(E^{**},\pi^{\mathcal V})$ satisfies the Jordan identity.
\end{example}

An element $e$ in a Jordan Banach triple system $(E,\pi)$ is called \emph{tripotent} if $\pi(e,e,e)=e$. To simplify the notation, given $a,b\in E$ we write $L(a,b)$ for the linear mapping defined by $L(a,b) (x)= L^{\pi}(a,b) (x)= \pi (a,b,x)$ ($x\in E$). We shall denote by $Q(a,b)=Q^{\pi}(a,b)$ the conjugate linear map given by $Q^{\pi}(a,b) (x) = \pi (a,x,b)$. We shall write $Q(a)=Q^{\pi}(a)$ for $Q^{\pi}(a,a)$. 
\smallskip

It is known (see \cite[page 32]{Ch} or \cite[Theorem 3.13]{LoosIrvine}) that each tripotent $e$ in a Jordan Banach triple system $E$ induces a decomposition (called the Peirce decomposition) of $E$ as the direct sum of the eigenspaces of the mapping $L^{\pi}(e,e),$ that is,
$$E=E_0(e)\oplus E_1(e)\oplus E_2(e),$$
where $E_k(e)=\{x\in E : L^{\pi}(e,e)x=\frac{k}{2} x\}$ for $k=0,1,2.$ The space $E_k(e)$ is called the Peirce $k$-space associated with $e$. Peirce $k$-spaces satisfy the following (Peirce) multiplication rules:
\begin{enumerate}[$(1)$]
\item  $\{E_{i_{}}(e),E_{j_{}}(e),E_{k_{}}(e)\}\subseteq E_{i-j+k}(e),$
if $i-j+k\in\{0,1,2\}$ and $\{E_{i_{}}(e),E_{j_{}}(e),E_{k_{}}(e)\}=0$ otherwise;
\item  $\{E_0(e),E_2(e),E\}=\{E_2(e),E_0(e),E\}=0.$
\end{enumerate} As a consequence, Peirce $k$-spaces are subtriples. The projection $P_{k_{}}(e)$ of $E$ onto $E_{k_{}}(e)$ is called the Peirce $%
k$-projection of $e$. Peirce projections are given by the next formulae $$ P_{2_{}}(e) =Q^{\pi}(e)^2; \  P_{1_{}}(e) =2(L^{\pi}(e,e)-Q^{\pi}(e)^2); \ P_{0_{}}(e) =Id_E-2L^{\pi}(e,e)+Q^{\pi}(e)^2.$$ Actually $E_2(e)$ is a unital Jordan $^*$-algebra with unit $e$, Jordan product $a\circ_e b = \pi(a,e,b)$ and involution $a^{*_e} = Q^{\pi}(e) (a)$. The tripotent $e$ is called unitary if $E_2(e) = E$.\smallskip


Given $a,b$ in a Jordan triple system $(E,\pi)$ it follows from the Jordan identity that
\begin{equation}\label{eq Q Q} Q^{\pi}(a) Q^{\pi}(b) = 2 L^{\pi}(a,b) L^{\pi}(a,b) -L^{\pi}(Q^{\pi}(a)(b),b).
\end{equation} Let us pick $a,b\in E_2(e)$. We deduce from Peirce's rules that \begin{equation}\label{eq 2 at the extremes} Q^{\pi} (a,b) (x) = \pi(a,P_0(e) (x)\! +\! P_1(e) (x)\!+\! P_2 (e) (x),b) = \pi(a, P_2 (e) (x),b) = Q^{\pi} (a,b) P_2(e) (x),
\end{equation} for all $x\in E$.

\begin{proposition}\label{p unitary} Let $(E,\pi)$ be a Jordan Banach triple system. Suppose that the extension $\pi^0$ is symmetric in the outer variables. The the following statements hold:\begin{enumerate}[$(i)$]\item The mapping $Q^{\pi^0}(m) Q^{\pi^0}(n) : E^{**}\to E^{**}$ is weak$^*$-continuous for every $m,n\in E^{**}$;
\item If $e$ is a tripotent in $E$, then $Q^{\pi^0} (m,n): E^{**}\to E^{**}$ is weak$^*$-continuous for every $m,n\in (E^{**})_2(e)$;
\item If for each $m\in E^{**}$ there exists a tripotent $e\in E$ such that $m\in (E^{**})_2(e)$, then $\pi^0$ is separately weak$^*$-continuous; and hence $(E,\pi)$ is Aron--Berner regular;
\item If $u$ is a unitary tripotent in $E$, then $u$ is a unitary tripotent in $(E^{**},\pi^{0})$, $\pi^0$ is separately weak$^*$-continuous, and  $(E,\pi)$ is Aron--Berner regular.
\end{enumerate}
\end{proposition}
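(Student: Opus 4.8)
The plan is to leverage the standing hypothesis through Theorem~\ref{t outer symmetric is sufficient and necessary for pi0}: since $\pi^0$ is symmetric in the outer variables, $(E^{**},\pi^0)$ is \emph{already} a Jordan Banach triple system, so every algebraic identity valid in a Jordan triple system holds verbatim in $(E^{**},\pi^0)$; in particular the operator identity \eqref{eq Q Q}, the Peirce theory, and the relation \eqref{eq 2 at the extremes} all transfer. First I would record the continuity facts used throughout. By Proposition~\ref{B2} the map $\pi^0(\cdot,n,p)$ is $w^*$-continuous for all $n,p\in E^{**}$, and the symmetry gives $\pi^0(m,n,\cdot)=\pi^0(\cdot,n,m)$, hence $L^{\pi^0}(m,n)$ is $w^*$-continuous for \emph{all} $m,n\in E^{**}$. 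Moreover, for $a\in E$ the map $\pi^0(a,\cdot,p)=Q^{\pi^0}(a,p)(\cdot)$ is $w^*$-continuous for every $p\in E^{**}$; specializing $a=p=e$ shows that $Q^{\pi^0}(e)$ and $L^{\pi^0}(e,e)$, and therefore the Peirce projections $P^{\pi^0}_k(e)$, are $w^*$-continuous for a tripotent $e\in E$. I will freely use that a composition of bounded $w^*$-continuous (possibly conjugate-linear) maps on $E^{**}$ is again $w^*$-continuous.

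For $(i)$ I would simply read off \eqref{eq Q Q} in $(E^{**},\pi^0)$, namely
\[
Q^{\pi^0}(m)Q^{\pi^0}(n)=2\,L^{\pi^0}(m,n)L^{\pi^0}(m,n)-L^{\pi^0}\!\big(Q^{\pi^0}(m)(n),\,n\big).
\]
Each term on the right is built from maps $L^{\pi^0}(\cdot,\cdot)$ with both arguments in $E^{**}$, hence $w^*$-continuous by the first paragraph, so the left-hand side is $w^*$-continuous.

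For $(ii)$ the tripotent enters. Polarizing $(i)$ in the first argument via $Q^{\pi^0}(a,b)=\tfrac12\big(Q^{\pi^0}(a+b)-Q^{\pi^0}(a)-Q^{\pi^0}(b)\big)$ shows that $Q^{\pi^0}(m,n)Q^{\pi^0}(c)$ is $w^*$-continuous for all $m,n,c\in E^{**}$. Taking $m,n\in (E^{**})_2(e)$ and $c=e$, the Peirce relation \eqref{eq 2 at the extremes} gives $Q^{\pi^0}(m,n)=Q^{\pi^0}(m,n)P^{\pi^0}_2(e)$, and since $P^{\pi^0}_2(e)=Q^{\pi^0}(e)^2$ I can factor
\[
Q^{\pi^0}(m,n)=\big[Q^{\pi^0}(m,n)Q^{\pi^0}(e)\big]\circ Q^{\pi^0}(e),
\]
a composition of the $w^*$-continuous map $Q^{\pi^0}(m,n)Q^{\pi^0}(e)$ with the $w^*$-continuous map $Q^{\pi^0}(e)$, which proves $(ii)$.

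Parts $(iii)$ and $(iv)$ are then short. For $(iii)$, applying the hypothesis to a single element $z$ produces a tripotent $e$ with $z\in (E^{**})_2(e)$, so $(ii)$ with $m=n=z$ gives that $Q^{\pi^0}(z)$ is $w^*$-continuous for \emph{every} $z$; polarizing once more yields $w^*$-continuity of $Q^{\pi^0}(m,p)=\pi^0(m,\cdot,p)$ for all $m,p$, i.e. continuity in the middle variable, which together with the outer-variable continuity recorded above makes $\pi^0$ separately $w^*$-continuous, whence Aron--Berner regularity by Remark~\ref{r new}. For $(iv)$, if $u$ is unitary in $E$ then $L^{\pi}(u,u)=\mathrm{Id}_E$; as $L^{\pi^0}(u,u)$ is $w^*$-continuous and coincides with the identity on the $w^*$-dense subspace $E$, it equals $\mathrm{Id}_{E^{**}}$, so $(E^{**})_2(u)=E^{**}$ and $u$ is unitary in $(E^{**},\pi^0)$; this verifies the hypothesis of $(iii)$ with $e=u$ for every $m$, and the conclusion follows. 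The crux of the whole argument is $(ii)$: upgrading the \emph{composite} continuity of $(i)$ to continuity of the \emph{single} operator $Q^{\pi^0}(m,n)$, which fails for arbitrary $m,n$ and is rescued precisely by the Peirce-$2$ hypothesis together with the factorization $P_2(e)=Q^{\pi^0}(e)^2$; keeping track of conjugate-linearity of the $Q$-operators is the only bookkeeping subtlety and is routine.
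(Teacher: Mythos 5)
Your proof is correct and follows essentially the same route as the paper: both invoke Theorem \ref{t outer symmetric is sufficient and necessary for pi0} to transfer the Jordan structure to $(E^{**},\pi^0)$, use the identity \eqref{eq Q Q} for $(i)$, the Peirce factorization $P_2(e)=Q^{\pi^0}(e)^2$ from \eqref{eq 2 at the extremes} for $(ii)$, polarization for $(iii)$, and reduction to $(iii)$ after checking unitarity of $u$ in the bidual for $(iv)$. The only (inessential) variations are that in $(ii)$ you polarize before applying the Peirce factorization while the paper factors $Q^{\pi^0}(m)$ first and polarizes afterwards, and in $(iv)$ you identify $L^{\pi^0}(u,u)$ with the identity by $w^*$-density where the paper uses $P_2(u)=Q^{\pi^0}(u)^2$ and Goldstine's theorem.
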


\begin{proof} Proposition \ref{symmetry} actually assures that $\pi^{0}=\pi^1=\pi^2=\pi^4$. We shall only deal with $\pi^0$.\smallskip

$(i)$ Since $\pi^0$ is symmetric in the outer variables, Theorem \ref{t outer symmetric is sufficient and necessary for pi0} assures that $(E^{**},\pi^0)$ is a Jordan Banach triple system. By construction the mappings $\pi^0 (\cdot,n,p)$, $\pi^0 (a,\cdot,p)$ and $\pi^0 (a,b,\cdot)$ are weak$^*$-continuous for all $n,p\in E^{**}$, $a,b\in E$. It follows from the hypothesis that the mapping $\pi^0 (p,n,\cdot)$ is weak$^*$-continuous for all $n,p\in E^{**}$. These properties guarantee that the mapping $L^{\pi^0}(n,p)$ is weak$^*$-continuous for all $n,p\in E^{**}$. Let us take $m,n\in E^{**}$, by \eqref{eq Q Q}, $Q^{\pi^0}(m)Q^{\pi^0}(n) = 2 L^{\pi^0}(m,n) L^{\pi^0}(m,n) -L^{\pi^0}(Q^{\pi^0}(m)(n),n)$ is weak$^*$-continuous too.\smallskip

$(ii)$ Suppose $e$ is a tripotent in $E$. Clearly $e$ is a tripotent in $(E^{**},\pi^{0})$. Since $e\in E$, the mapping $Q^{\pi^0}(e) = \pi^0(e,\cdot,e)$ is weak$^*$-continuous. Given $m\in (E^{**})_2(e)$, it follows from Peirce's rules (cf. \eqref{eq 2 at the extremes}) that $Q^{\pi^0} (m) = Q^{\pi^0} (m) P_2(e) = Q^{\pi^0} (m) Q^{\pi^0} (e) Q^{\pi^0} (e)$. Combining $(i)$ and the weak$^*$-continuity of $Q^{\pi^0}(e)$, we deduce that $Q^{\pi^0} (m)$ is weak$^*$-continuous. Now, given $m,n\in (E^{**})_2(e)$, by symmetry $Q^{\pi^0}(m,n) =\frac12( Q^{\pi^0}(m+n) - Q^{\pi^0}(m)-Q^{\pi^0}(n))$ is weak$^*$-continuous.\smallskip

$(iii)$ The assumptions imply that $\pi^0 (\cdot,m,n)=\pi^0 (n,m,\cdot)$ is weak$^*$-continuous for every $m,n\in E^{**}$. We deduce from the hypothesis and $(ii)$ that $Q(m)$ is weak$^*$-continuous for all $m\in E^{**}$, and thus the map $Q^{\pi^0}(m,n) =\frac12( Q^{\pi^0}(m+n) - Q^{\pi^0}(m)-Q^{\pi^0}(n))$ also is weak$^*$-continuous for every $m,n\in E^{**}$. Therefore $\pi^0 (m,\cdot,n)$ is weak$^*$-continuous for all $m,n\in E^{**}$, which shows that $\pi^0$ is separately weak$^*$-continuous. Remark \ref{r new} implies that $(E,\pi)$ is Aron--Berner regular.\smallskip

$(iv)$ Suppose $u$ is a unitary tripotent in $E$. Clearly $u$ is a tripotent in $(E^{**},\pi^0).$ We shall prove that $u$ is a unitary tripotent in $(E^{**},\pi^i)$. Having in mind that $u\in E$, the mapping $Q^{\pi^0}(u) = \pi^0(u,\cdot,u)$ is weak$^*$-continuous. We also know that $P_2(u)(x) = x$ for all $x\in E$ because $u$ is unitary. Combining these facts with Goldstine's theorem, we deduce that $P_2(u) (m) = Q^{\pi^0}(u) Q^{\pi^0}(u) (m) = m,$ for all $m\in E^{**}$, which proves that $u$ is unitary in $(E^{**},\pi^0).$ The desired conclusion is a consequence of $(iii)$ because $E^{**} = (E^{**})_2(u)$.
\end{proof}

We finish the paper by posing some questions which, to the best of
our knowledge,  seem to be open.

\begin{enumerate}[\hspace{1.5em}\rm(1)]
\item Is there a non Aron--Berner regular Jordan Banach triple system $(E,\pi)$ for which $(E^{**},\pi^{\mathcal U})$ is a Jordan Banach triple system? (See Example \ref{ex}(i)).
\item Let $(E,\pi)$  be a Jordan Banach triple system such that the extension $\pi^i$ is symmetric in the outer variables for  every  $i=0,\ldots,5.$ Is  $(E,\pi)$ Aron--Berner regular? (See Propositions \ref{l AB regularity implies the Jordan identity in the second dual} and  \ref{symmetry}).
\end{enumerate}\bigskip

\textbf{Acknowledgements} Third author partially supported by Junta de Andaluc\'{\i}a grant FQM375.


\bibliographystyle{amsplain}

\end{document}